\renewcommand{\p@enumii}{}
\def\@enum@{\list{\csname label\@enumctr\endcsname}%
           {\usecounter{\@enumctr}\def\makelabel##1{
\normalfont\ignorespaces\emph{{##1}~}}
\setlength{\labelsep}{3pt}
\setlength{\parsep}{0pt}
\setlength{\itemsep}{0pt}
\setlength{\leftmargin}{0pt}
\setlength{\labelwidth}{0pt}
\setlength{\listparindent}{\parindent}
\setlength{\itemsep}{0pt}
\setlength{\itemindent}{0pt}
\topsep=3pt plus 1pt minus 1 pt}}
\renewcommand{\epsilon}{\ensuremath{\varepsilon}}
\renewcommand{\phi}{\ensuremath{\varphi}}
\renewcommand{\to}{\ensuremath{\longrightarrow}}
\newcommand{\N}{\ensuremath{\mathbb N}}
\newcommand{\Z}{\ensuremath{\mathbb Z}}
\newcommand{\dt}{\ensuremath{\mathbb D}^{2}}
\newcommand{\St}[1][2]{\ensuremath{\mathbb S}^{#1}}
\newcommand{\rp}{\ensuremath{\mathbb{R}P^2}}
\newcommand{\ord}[1]{\ensuremath{\left\lvert #1\right\rvert}}
\newcommand{\sn}[1][n]{\ensuremath{S_{{#1}}}}
\newcommand{\an}[1][n]{\ensuremath{A_{{#1}}}}
\DeclareRobustCommand*{\up}[1]{\textsuperscript{#1}}
\renewcommand{\th}{\ensuremath{\up{th}}}
\newcommand{\ft}[1][n]{\ensuremath{\Delta_{#1}^{2}}}
\newcommand{\garside}[1][n]{\ensuremath{\Delta_{#1}}}
\renewcommand{\ker}[1]{\ensuremath{\operatorname{\text{Ker}}\left({#1}\right)}}
\newcommand{\im}[1]{\ensuremath{\operatorname{\text{Im}}\left({#1}\right)}}
\newcommand{\id}{\ensuremath{\operatorname{\text{Id}}}}
\newcommand{\quat}[1][8]{\ensuremath{\mathcal{Q}_{#1}}}
\newcommand{\mcgst}[1][n]{\ensuremath{\operatorname{\mathcal{MCG}}(\St,#1)}}
\newcommand{\mcgrp}[1][n]{\ensuremath{\operatorname{\mathcal{MCG}}(\rp,#1)}}
\newcommand{\mcgd}[1][n]{\ensuremath{\operatorname{\mathcal{MCG}}(\dt,#1)}}
\newcommand{\mcgdb}[1][n]{\ensuremath{\operatorname{\mathcal{MCG}}_{\partial}(\dt,#1)}}
\newcommand{\mcg}[1][n]{\ensuremath{\operatorname{\mathcal{MCG}}(M,#1)}}
\newcommand{\pmcgst}[1][n]{\ensuremath{\operatorname{\mathcal{P\!MCG}}(\St,#1)}}
\newcommand{\pmcgrp}[1][n]{\ensuremath{\operatorname{\mathcal{P\!MCG}}(\rp,#1)}}
\newcommand{\pmcgdb}[1][n]{\ensuremath{\operatorname{\mathcal{P\!MCG}}_{\partial}(\dt,#1)}}
\newcommand{\pmcg}[1][n]{\ensuremath{\operatorname{\mathcal{P\!MCG}}(M,#1)}}
\def\@map#1#2[#3]{\mbox{$#1 \colon\thinspace #2 \to #3$}}
\def\map#1#2{\@ifnextchar [{\@map{#1}{#2}}{\@map{#1}{#2}[#2]}}
\newcommand{\brak}[1]{\ensuremath{\left\{ #1 \right\}}}
\newcommand{\ang}[1]{\ensuremath{\left\langle #1\right\rangle}}
\newcommand{\normcl}[1]{\ensuremath{\ang{\!\ang{#1}\!}}}
\newcommand{\setangl}[2]{\ensuremath{\ang{\left. #1 \,\right\rvert \, #2}}}
\newcommand{\setr}[2]{\ensuremath{\brak{#1 \,\left\lvert \, #2 \right.}}}
\newcommand{\setl}[2]{\ensuremath{\brak{\left. #1 \,\right\rvert \, #2}}}
\newtheoremstyle{theoremm}{}{}{\itshape}{}{\scshape}{.}{ }{}
\theoremstyle{theoremm}
\newtheorem{thm}{Theorem}
\newtheorem{prop}[thm]{Proposition}
\newtheoremstyle{remarkk}{}{}{}{}{\scshape}{.}{ }{}
\theoremstyle{remarkk}
\newtheorem{rem}[thm]{Remark}
\newcommand{\reth}[1]{Theorem~\protect\ref{th:#1}}
\newcommand{\repr}[1]{Proposition~\protect\ref{prop:#1}}
\newcommand{\resec}[1]{Section~\protect\ref{sec:#1}}
\newcommand{\req}[1]{equation~(\protect\ref{eq:#1})}
\newcommand{\reqref}[1]{(\protect\ref{eq:#1})}
\begin{document}

\title{Minimal generating and normally generating sets for the braid and mapping class groups of $\dt$, $\St$ and $\rp$}
\author{DACIBERG~LIMA~GON\c{C}ALVES\\
Departamento de Matem\'atica - IME-USP,\\
Caixa Postal~66281~-~Ag.~Cidade de S\~ao Paulo,\\ 
CEP:~05314-970 - S\~ao Paulo - SP - Brazil.\\
e-mail:~\texttt{dlgoncal@ime.usp.br}\vspace*{4mm}\\
JOHN~GUASCHI\\
Laboratoire de Math\'ematiques Nicolas Oresme UMR CNRS~\textup{6139},\\
Universit\'e de Caen Basse-Normandie BP 5186,\\
14032 Caen Cedex, France.\\
e-mail:~\texttt{john.guaschi@unicaen.fr}}

\date{6th January 2012}

\begingroup
\renewcommand{\thefootnote}{}
\footnotetext{2010 AMS Subject Classification: 20F36 (primary)}
\endgroup 


\maketitle

\begin{abstract}\noindent
\emph{We consider the (pure) braid groups $B_{n}(M)$ and $P_{n}(M)$, where $M$ is the $2$-sphere $\St$ or the real projective plane $\rp$. We determine the minimal cardinality of (normal) generating sets $X$ of these groups, first when there is no restriction on $X$, and secondly when $X$ consists of elements of finite order. This improves on results of Berrick and Matthey in the case of $\St$, and extends them in the case of $\rp$. We begin by recalling the situation for the Artin braid groups ($M=\dt$). As applications of our results, we answer the corresponding questions for the associated mapping class groups, and we show that for $M=\St$ or $\rp$, the induced action of $B_n(M)$ on $H_3(\widetilde{F_n(M)};\Z)$ is trivial, $F_{n}(M)$ being the $n\up{th}$ configuration space of $M$.}
%
%
\end{abstract}

\section{Introduction}\label{sec:intro}

The braid groups $B_n$ of the plane were introduced by E.~Artin in~1925~\cite{A1,A2}. Braid groups of surfaces were studied by Zariski~\cite{Z}. They were later generalised by Fox to braid groups of arbitrary topological spaces via the following
definition~\cite{FoN}. Let $M$ be a compact, connected surface, and
let $n\in\N$. We denote the set of all ordered $n$-tuples of distinct
points of $M$, known as the \emph{$n\th$ configuration space of $M$},
by:
\begin{equation*}
F_n(M)=\setr{(p_1,\ldots,p_n)}{\text{$p_i\in M$ and $p_i\neq p_j$ if $i\neq j$}}.
\end{equation*}
Configuration spaces play an important r\^ole in several branches of mathematics and have been
extensively studied, see~\cite{CG,FH} for example. 

The symmetric group $\sn$ on $n$ letters acts freely on $F_n(M)$ by
permuting coordinates. The corresponding quotient will be denoted by
$D_n(M)$. The \emph{$n\th$ pure braid group $P_n(M)$} (respectively
the \emph{$n\th$ braid group $B_n(M)$}) is defined to be the
fundamental group of $F_n(M)$ (respectively of $D_n(M)$). We thus obtain a natural short exact sequence 
\begin{equation}\label{eq:defperm}
1 \to P_{n}(M) \to B_{n}(M) \stackrel{\pi}{\to} \sn \to 1.
\end{equation}
If $\dt\subseteq \rp$ is a topological disc, there is a group homomorphism $\map
{\iota}{B_n}[B_n(M)]$ induced by the inclusion. If $\beta\in B_n$ then we shall denote its image
$\iota(\beta)$ simply by $\beta$.

Together with the $2$-sphere $\St$, the braid groups of the
real projective plane $\rp$ are of particular interest, notably because they have non-trivial centre~\cite{vB,GG2}, and torsion elements~\cite{vB,M}.
Indeed, Fadell and Van Buskirk showed that $\St$ and $\rp$ are the only compact, connected surfaces whose braid groups have torsion. Let us recall briefly some of the properties of the braid groups of these two surfaces as well as those of the $2$-disc $\dt$~\cite{A1,A2,FvB,GvB,GG2,H,M2,vB}.

Let $n\in \N$. The Artin braid group $B_{n}$, which may be interpreted as the braid group of $\dt$, is generated by $\sigma_1,\ldots,\sigma_{n-1}$ that are subject to the following relations:
\begin{gather}
\text{$\sigma_{i}\sigma_{j}=\sigma_{j}\sigma_{i}$ if $\lvert i-j\rvert\geq 2$
and $1\leq i,j\leq n-1$}\label{eq:Artin1}\\
\text{$\sigma_{i}\sigma_{i+1}\sigma_{i}=\sigma_{i+1}\sigma_{i}\sigma_{i+1}$ for
all $1\leq i\leq n-2$.}\label{eq:Artin2}
\end{gather}
The sphere braid group $B_{n}(\St)$ is also generated by $\sigma_1,\ldots,\sigma_{n-1}$, subject to the relations~\reqref{Artin1} and~\reqref{Artin2}, as well as the following `surface relation':
\begin{gather}
\sigma_1\cdots \sigma_{n-2}\sigma_{n-1}^2 \sigma_{n-2}\cdots \sigma_1=1.\label{eq:surface}
\end{gather}
Consequently, $B_n(\St)$ is a quotient of $B_n$. The first three sphere braid
groups are finite: $B_1(\St)$ is trivial, $B_2(\St)$ is cyclic of order~$2$,
and $B_3(\St)$ is isomorphic to the dicyclic group of order $12$ (the semi-direct product $\Z_{3}\rtimes \Z_{4}$ with non-trivial action). For $n\geq 4$, $B_n(\St)$ is infinite. The Abelianisation of $B_{n}$ (resp.\ $B_n(\St)$) is isomorphic to $\Z$ (resp.\ the cyclic group $\Z_{2(n-1)}$), where the Abelianisation homomorphism identifies all of the $\sigma_{i}$ to a single generator of $\Z$ (resp.\ of $\Z_{2(n-1)}$).

It is well known that $B_{n}$ is torsion free. Gillette and Van Buskirk showed that if $n\geq 3$ and $k\in \N$ then $B_n(\St)$ has an element of order $k$ if and only if $k$ divides one of $2n$, $2(n-1)$ or $2(n-2)$~\cite{GvB}. The torsion elements of $B_n(\St)$ were later characterised by
Murasugi:
\begin{thm}[Murasugi~\cite{M}]\label{th:murasugi}
Let $n\geq 3$. Then up to conjugacy, the torsion elements of $B_n(\St)$ are precisely the powers of
the following three elements:
\begin{enumerate}[(a)]
\item $\alpha_0= \sigma_1\cdots \sigma_{n-2}
\sigma_{n-1}$ (which is of order $2n$).
\item $\alpha_1=\sigma_1\cdots
\sigma_{n-2} \sigma_{n-1}^2$ (of order $2(n-1)$).
\item $\alpha_2=\sigma_1\cdots \sigma_{n-3} \sigma_{n-2}^2$ (of order
$2(n-2)$).
\end{enumerate}
\end{thm}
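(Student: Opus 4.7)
The plan is to split the statement into two complementary parts: first, verify that the three elements $\alpha_0$, $\alpha_1$, $\alpha_2$ do have the stated orders, and then show that every torsion element of $B_n(\St)$ is conjugate to a power of one of them.

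For the orders, I would establish the key identity $\alpha_0^n = \alpha_1^{n-1} = \alpha_2^{n-2} = \ft$. The equality $\alpha_0^n = \ft$ is the classical Garside identity in $B_n$, verified from the braid relations~\reqref{Artin1}--\reqref{Artin2}; the remaining two equalities additionally use the surface relation~\reqref{surface}. Since $\ft$ generates the centre of $B_n(\St)$ and has order two for $n \geq 3$, it follows that $\alpha_i^{2(n-i)} = 1$, so $\ord{\alpha_i}$ divides $2(n-i)$. On the other hand $\pi(\alpha_i)$ is visibly a cycle of length $n-i$ in $\sn$, so $n-i$ divides $\ord{\alpha_i}$; and since $\alpha_i^{n-i} = \ft \neq 1$, the value $n-i$ itself is excluded, leaving $\ord{\alpha_i} = 2(n-i)$.

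For the converse direction, let $g \in B_n(\St)$ be a torsion element. Consider the canonical surjection from $B_n(\St)$ onto the mapping class group $\mcgst$ of the sphere with $n$ (unordered) marked points, whose kernel is the central subgroup $\ang{\ft} \cong \Z_2$. The image $\overline{g}$ is again torsion, so by the Nielsen realisation theorem it is represented by a finite-order orientation-preserving diffeomorphism $\phi$ of $\St$ preserving the marked set. By the classical Brouwer--Ker\'ekj\'art\'o theorem, every such diffeomorphism is conjugate to a genuine rotation of $\St$ about an axis. This rotation has exactly two fixed points on $\St$, of which a number $b \in \brak{0,1,2}$ lie in the marked set, while the remaining $n-b$ marked points are permuted freely in orbits of common size equal to the order $k$ of the rotation, so $k$ divides $n-b$. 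In each of the three cases $b=0,1,2$, a direct geometric argument identifies $\overline{g}$, up to conjugacy in $\mcgst$, with a power of the image of $\alpha_b$.

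Finally, to lift the conclusion back to $B_n(\St)$, note that the preimage in $B_n(\St)$ of a power of $\overline{\alpha_b}$ consists of the two elements $\alpha_b^k$ and $\ft \cdot \alpha_b^k$. Because $\ft = \alpha_b^{n-b}$ by the identity established above, both lifts are themselves powers of $\alpha_b$, so $g$ is conjugate in $B_n(\St)$ to a power of $\alpha_b$, as desired. I would expect the main obstacle to lie in the geometric matching step: one must check explicitly that the rotation corresponding to each admissible pattern $(b,k)$ of fixed and free orbits lifts to a braid lying in the conjugacy class of the appropriate power of $\alpha_b$, which requires careful bookkeeping of the strings and their crossings, together with an appeal (when needed) to the realisation of finite cyclic subgroups of $\mcgst$ rather than merely of single torsion elements.
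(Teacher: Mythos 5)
Your outline is essentially sound, but bear in mind that the paper does not prove this statement at all: it is quoted from Murasugi~\cite{M}, whose original argument is algebraic, identifying the finite-order elements via the relationship between $B_n(\St)$ and fundamental groups of Seifert fibre spaces, rather than via realisation of mapping classes. The route you propose --- project to $\mcgst\cong B_n(\St)/\ang{\ft}$, realise a torsion mapping class by a finite-order homeomorphism, use Brouwer--Ker\'ekj\'art\'o to reduce to a rotation, sort the marked points into $b\in\brak{0,1,2}$ fixed points plus free orbits, match the result with a power of $\overline{\alpha_b}$, and lift using the facts that $\ker{B_n(\St)\to\mcgst}=\ang{\ft}$ and $\ft=\alpha_b^{n-b}$ --- is the standard geometric proof, and it is precisely the chain of results the paper itself invokes via Eilenberg~\cite{E}, von Ker\'ekj\'art\'o~\cite{K} and \cite[Lemma~3.1]{GW} when it needs the corresponding statement for $\mcgst$ (see the proofs of \repr{mingenartinmapp} and \repr{normclst}); your order computation and the central-lifting step at the end are correct as written. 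What each approach buys: Murasugi's method avoids any realisation theorem, while yours yields the sphere mapping class group statement simultaneously and gives the geometric picture behind the three families. To make your sketch complete you should (i) dispose separately of the case where $\overline{g}$ is trivial, i.e.\ $g\in\brak{1,\ft}$, which is a power of $\alpha_0$ since $\ft=\alpha_0^n$; (ii) justify the realisation step for the $n$-marked sphere, $n\geq 3$ (Nielsen realisation for the hyperbolic punctured sphere, or the classical Eilenberg--Ker\'ekj\'art\'o results, with the $n=3$ case checkable directly since $B_3(\St)$ has order $12$); and (iii) carry out the matching step you flag as the main obstacle: one must verify that the rotation through $2\pi/(n-b)$ permuting the standard configuration represents $\overline{\alpha_b}$ in terms of the generators $\sigma_i$, and then move an arbitrary invariant marked set into standard position by a homeomorphism chosen equivariantly with respect to the rotation, so that an order-$k$ rotation with $b$ marked fixed points becomes conjugate to $\overline{\alpha_b}^{\,j(n-b)/k}$ with $\gcd(j,k)=1$; this bookkeeping is genuine content but presents no essential difficulty.
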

The three elements $\alpha_0$, $\alpha_1$ and $\alpha_2$ are
respectively $n\th$, $(n-1)\th$ and $(n-2)\th$ roots of $\ft$, where
$\ft$ is the so-called `full twist' braid of $B_n(\St)$, defined by
$\ft= (\sigma_1\cdots\sigma_{n-1})^n$. In other words:
\begin{equation}\label{eq:rootsft}
\alpha_0^{n}=\alpha_1^{n-1}=\alpha_2^{n-2}=\ft.
\end{equation}
So $B_n(\St)$ admits finite cyclic subgroups isomorphic to $\Z_{2n}$, $\Z_{2(n-1)}$ and $\Z_{2(n-2)}$. In~\cite[Theorem~3]{GG3}, we showed that $B_n(\St)$ is generated
by $\alpha_0$ and $\alpha_1$. If $n\geq 3$, $\ft[n]$ is the unique
element of $B_n(\St)$ of order $2$, and it generates the centre of
$B_n(\St)$. It is also the square of the \emph{Garside element} (or
`half twist') defined by:
\begin{equation}\label{eq:defgarside}
\garside = (\sigma_1 \cdots \sigma_{n-1}) (\sigma_1 \cdots \sigma_{n-2}) \cdots
(\sigma_1 \sigma_2)\sigma_1.
\end{equation}

Now let us turn to the braid groups of $\rp$. A presentation is given by:
\begin{prop}[\cite{vB}]\label{prop:presvb}
The following constitutes a presentation of the group $B_n(\rp)$:
\begin{enumerate}
\item[\underline{generators:}] $\sigma_1,\ldots,\sigma_{n-1},\rho_1,\ldots,\rho_n$.
\item[\underline{relations:}]\mbox{}
\begin{enumerate}[(i)]
\item relations~\reqref{Artin1} and~\reqref{Artin2}.
\item $\sigma_i\rho_j=\rho_j\sigma_i$ for $j \ne i, i+1$.
\item\label{it:rel3rp2} $\rho_{i+1}=\sigma_i^{-1}\rho_i\sigma_i^{-1}$ for $1\leq i \leq n-1$.
\item $\rho_{i+1}^{-1}\rho_i^{-1}\rho_{i+1}\rho_i=\sigma_i^2$  for $1\leq i \leq n-1$.
\item $\rho_1^2=\sigma_1\sigma_2\cdots\sigma_{n-2}\sigma_{n-1}^2\sigma_{n-2}\ldots\sigma_2\sigma_1$.
\end{enumerate}
\end{enumerate}
\end{prop}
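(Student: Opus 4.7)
My plan is to prove the presentation is complete by induction on $n$, combining a geometric realisation of the generators with the Fadell--Neuwirth fibration. First I fix a geometric model: realise $\rp$ as a disc $\dt$ with antipodal boundary identifications, and line up the basepoints $p_1, \ldots, p_n$ along a diameter in the interior of $\dt$. Then $\sigma_i$ becomes the ordinary Artin half-twist of strands $i$ and $i+1$ performed inside $\dt$, and $\rho_i$ becomes the pure braid that sends $p_i$ once across the cross-cap along a specified arc, returning to $p_i$ and leaving every other basepoint fixed. With this description I would verify each of the relations~(i)--(v) by an explicit braid picture: (i) reduces to Artin's classical relations inside the disc; (ii) follows from disjointness of the supports of $\sigma_i$ and $\rho_j$ when $j\neq i, i+1$; (iii) expresses $\rho_{i+1}$ in terms of $\rho_i$ after transposing strands $i$ and $i+1$; (iv) is the commutator identity that records the full twist produced by two adjacent cross-cap loops; and (v) is the surface relation, stating that pushing $p_1$ twice across the cross-cap is isotopic to dragging it once around the remaining $n-1$ points.

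These verifications produce a surjective homomorphism $\map{\phi_n}{G_n}[B_n(\rp)]$, where $G_n$ denotes the group with the stated presentation; surjectivity follows from the fact that the $\sigma_i$ and $\rho_i$ generate $B_n(\rp)$, itself a consequence of the Fadell--Neuwirth fibration and a short induction. For injectivity I would argue by induction on $n$. The base $n=1$ reduces to $B_1(\rp)=\pi_1(\rp)\cong \Z_2$, and relation~(v) collapses to $\rho_1^2=1$ (the right-hand product being empty). For the inductive step, apply the Fadell--Neuwirth fibration $F_n(\rp)\to F_{n-1}(\rp)$ obtained by forgetting the last coordinate; its fibre is $\rp$ with $n-1$ points removed, whose fundamental group is free of rank $n-1$. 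The resulting short exact sequence of pure braid groups, combined with the permutation extension $1\to P_n(\rp)\to B_n(\rp)\to\sn\to 1$ and the inductive presentation of $B_{n-1}(\rp)$, feeds into the standard lemma for presentations of group extensions to yield a presentation of $B_n(\rp)$.

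The principal obstacle is the bookkeeping needed to reduce the output of the extension machinery, via Tietze transformations, to the tidy list~(i)--(v). Specifically, the conjugation relations of the form $\rho_n x \rho_n^{-1}$, with $x$ ranging over the kernel generators produced by the inductive hypothesis, must be rewritten as words in $\sigma_1,\ldots,\sigma_{n-1}$ and $\rho_1,\ldots,\rho_n$, and the extra relator arising from squaring a lift of the generator of $\pi_1(\rp)\cong\Z_2$ must be identified with the surface relation~(v). Once this rewriting is carried out, $\phi_n$ is seen to be an isomorphism and the induction closes. A mild technical subtlety is that the Fadell--Neuwirth fibration for $\rp$ does not always admit a section, but the extension-presentation lemma still applies provided one lifts the quotient generators carefully.
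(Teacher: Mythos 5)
The paper itself does not prove this proposition: it is quoted verbatim from Van Buskirk \cite{vB}, so the only meaningful comparison is with the strategy of that cited source, which is indeed the fibration-plus-extension scheme you describe. Your outline has the right general shape (geometric realisation of $\sigma_i$ and $\rho_i$, verification of (i)--(v) to get a surjection from the presented group, injectivity by induction), but as written it contains a genuine gap at the start of the induction. The passage from $n=1$ to $n=2$ does not work: the Fadell--Neuwirth fibration $F_2(\rp)\to F_1(\rp)=\rp$ has base with $\pi_2(\rp)\cong\Z$ non-trivial, so the sequence $\pi_1(\rp\setminus\brak{x_1})\to P_2(\rp)\to P_1(\rp)\to 1$ is \emph{not} exact on the left; indeed $\pi_1(\rp\setminus\brak{x_1})\cong\Z$ is infinite while $P_2(\rp)\cong\quat$ is finite, so the fibre group cannot inject. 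Hence the ``resulting short exact sequence of pure braid groups'' you invoke simply does not exist at that stage, and the case $n=2$ must be computed directly (this is exactly where relations (iv) and (v) carry their content, yielding $P_{2}(\rp)\cong\quat$ and $B_{2}(\rp)\cong\quat[16]$). For $n\geq 3$ exactness does hold, but only because $\pi_2(F_{n-1}(\rp))=0$, a fact you use implicitly and must justify. Your closing remark that the only subtlety is the possible absence of a section misdiagnoses the difficulty: sections are irrelevant to the extension-presentation lemma, whereas exactness is essential.

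There is also a structural muddle in the inductive step. You propose to feed ``the inductive presentation of $B_{n-1}(\rp)$'' into the extension machinery together with the permutation sequence $1\to P_n(\rp)\to B_n(\rp)\to\sn\to 1$, but that sequence requires a presentation of $P_n(\rp)$, not of $B_{n-1}(\rp)$, and there is no strand-forgetting homomorphism $B_n(\rp)\to B_{n-1}(\rp)$ (forgetting a strand is only defined on the subgroup of braids whose permutation fixes it). The workable version of your plan runs the Fadell--Neuwirth induction entirely on the pure braid groups (fibre free of rank $n-1$, base case $n=2$ handled by hand as above), and then applies the extension lemma once, to the extension of $\sn$ by $P_n(\rp)$ with the standard presentation of $\sn$. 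Finally, the reduction of the resulting large presentation to the short list (i)--(v) by Tietze transformations is the bulk of Van Buskirk's argument rather than routine bookkeeping; as it stands your proposal defers precisely the part of the proof where the stated relations are actually shown to suffice.
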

For $n\geq 2$, the Abelianisation of $B_{n}(\rp)$ is isomorphic to $\Z_{2}\oplus \Z_{2}$, the $\sigma_{i}$ (resp.\ $\rho_{j}$) being identified to a generator of the first (resp.\ second factor). A presentation of $P_{n}(\rp)$ was given in~\cite[Theorem~4]{GG8} (note however that the generators $\rho_{i}$ given there are slightly different from those of Van Buskirk). The first two braid groups of $\rp$ are finite: $B_1(\rp)=P_1(\rp)\cong \Z_{2}$, $P_{2}(\rp)$ is isomorphic to the quaternion group $\quat$ of order~$8$, and $B_{2}(\rp)$ is isomorphic to the generalised quaternion group $\quat[16]$ of order $16$. For $n\geq 3$, $B_{n}(\rp)$ is infinite. 
The finite order elements of $B_{n}(\rp)$ were also characterised by Murasugi~\cite[Theorem~B]{M}, however their orders were not clear, even for elements of $P_{n}(\rp)$. In~\cite[Corollary~19 and Theorem~4]{GG2}, we proved that for $n\geq 2$, the torsion of $P_{n}(\rp)$ is $2$ and $4$, and that of $B_{n}(\rp)$ is equal to the divisors of $4n$ and
$4(n-1)$, and in \cite[Section~3]{GG10} we simplified somewhat Murasugi's characterisation. 

Following~\cite[Sections~3 and~4]{GG2}, set 
\begin{equation}\label{eq:defab}
\left\{
\begin{aligned}
a &=\rho_{n}\sigma_{n-1}\cdots \sigma_{1}\\
b &=\rho_{n-1}\sigma_{n-2}\cdots \sigma_{1}
\end{aligned}
\right.
\end{equation}
and
\begin{equation}\label{eq:defalphabeta}
\left\{
\begin{aligned}
A &=a^n=\rho_{n}\cdots \rho_{1}\\
B &=b^{n-1}=\rho_{n-1}\cdots \rho_{1}.
\end{aligned}
\right.
\end{equation}
These elements play an important rôle in the analysis of $B_{n}(\rp)$, and in particular in the study of many of its finite subgroups. By~\cite[Proposition~26]{GG2}, $a$ and $b$ are of order $4n$ and $4(n-1)$ respectively, so $A$ and $B$ are of order $4$. Further, by~\cite[Proposition 10]{GG10}, any element of order $4$ in $P_{n}(\rp)$ is conjugate via an element of $B_{n}(\rp)$ to $A$ or $B$. However, the number of conjugacy classes in $P_{n}(\rp)$ of order $4$ elements was not known. A naïve upper bound for this number, $2n!$, may be obtained by multiplying the number of conjugacy classes in $P_{n}(\rp)$ by $[B_{n}(\rp):P_{n}(\rp)]$. In \repr{ccpnrp2}, we shall compute the exact value. 

If $\Gamma$ is a group, let $\Gamma\up{Ab}$ denote its Abelianisation. For $X$ a subset of $\Gamma$, let $\ang{X}$ denote the subgroup of $\Gamma$ generated by $X$, and let $\normcl{X}$ denote the normal closure of $X$ in $\Gamma$. Then $\Gamma$ is \emph{generated} (resp.\ \emph{normally generated}) by $X$ if $\Gamma=\ang{X}$ (resp.\ $\Gamma=\normcl{X}$). It is a natural question as to whether $\Gamma=\ang{X}$ is (normally) generated by a finite subset or not. If it is, one can ask the following questions: 
\begin{enumerate}
\item[\normalfont\textbf{Question~1:}]\label{it:qu1} compute
\begin{align*}
\operatorname{G}(\Gamma)&=\min \setr{\ord{X}}{\text{$\Gamma$ is generated by $X$}}, \quad \text{and} \\
\operatorname{NG}(\Gamma)&=\min \setr{\ord{X}}{\text{$\Gamma$ is normally generated by $X$}},
\end{align*}
the minimal number of elements among all (normal) generating sets of $\Gamma$. 

\item[\normalfont\textbf{Question~2:}] we can refine Question~\ref{it:qu1} if we impose additional constraints on the elements of $X$. We shall say that a group $\Gamma$ is \emph{torsion generated} (resp.\ \emph{normally torsion generated}) if there exists a subset $X$ of elements of $\Gamma$ of finite order such that $\Gamma=\ang{X}$ (resp.\ $\Gamma=\normcl{X}$). If there exists a finite set $X$ satisfying this property then one may ask to compute:
\begin{align*}
\operatorname{TG}(\Gamma)&=\min \setr{\ord{X}}{\text{$\Gamma$ is torsion generated by $X$}}, \quad\text{and}\\
\operatorname{NTG}(\Gamma)&=\min \setr{\ord{X}}{\text{$\Gamma$ is normally torsion generated by $X$}},
\end{align*}
the minimal number of elements among all (normal) generating sets of $G$ consisting of finite order elements.
\end{enumerate}
We have the following implications between the various notions:
\begin{align*}
& \text{torsion generated $\Longrightarrow$ generated $\Longrightarrow$ normally generated}\\
& \text{torsion generated $\Longrightarrow$ normally torsion generated $\Longrightarrow$ normally generated.}
\end{align*}
These relations imply that if the given numbers are defined for a group $\Gamma$ then:
\begin{align}
\operatorname{NG}(\Gamma) \leq \operatorname{G}(\Gamma)\leq \operatorname{TG}(\Gamma)\quad\text{and}\label{eq:inequgeni}\\
\operatorname{NG}(\Gamma) \leq \operatorname{NTG}(\Gamma) \leq \operatorname{TG}(\Gamma)\label{eq:inequgen}.
\end{align}
As a special case, recall from~\cite{BeMa,BeMi} that if $k\geq 2$, $\Gamma$ is said to be \emph{strongly $k$-torsion generated} if there exists an element $g_{k}\in \Gamma$ of order $k$ such that $\Gamma=\normcl{g_{k}}$. In other words, $\Gamma$ is strongly $k$-torsion generated for some $k\in \N$ if and only if $\operatorname{NTG}(\Gamma)=1$. In~\cite{BeMa}, Berrick and Matthey considered the problem of strong torsion generation for various groups, among them the braid groups of $\St$ and $\rp$, and they proved the following result.
\begin{prop}[{\cite[Proposition~5.3]{BeMa}}]\label{prop:bema}
The normal closure of the element $\alpha_{1}$ has index $\gcd(n,2)$ in $B_{n}(\St)$. In particular, for $n$ odd, $B_{n}(\St)$ is strongly $2(n-1)$-torsion generated.
\end{prop}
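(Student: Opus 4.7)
The plan is to sandwich $[B_n(\St):\normcl{\alpha_1}]$ between matching bounds, both equal to $\gcd(n,2)$. For the lower bound, push $\alpha_1$ into the Abelianisation: $B_n(\St)\up{Ab}\cong \Z_{2(n-1)}$ identifies every $\sigma_i$ with a common generator $g$, so $\alpha_1 = \sigma_1\cdots\sigma_{n-2}\sigma_{n-1}^2$ maps to $ng$. Since $\gcd(n,n-1)=1$, one gets $\gcd(n,2(n-1))=\gcd(n,2)$, so $\ang{ng}$ has index $\gcd(n,2)$ in $\Z_{2(n-1)}$, giving $[B_n(\St):\normcl{\alpha_1}]\geq \gcd(n,2)$.

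For the upper bound, I would work in $Q:=B_n(\St)/\normcl{\alpha_1}$ and show it is cyclic. Writing $\alpha_0=\sigma_1\cdots\sigma_{n-1}$, the factorisation $\alpha_1 = \alpha_0\sigma_{n-1}$ forces $\alpha_0=\sigma_{n-1}^{-1}$ in $Q$. The central ingredient is the identity
\[
\alpha_0\sigma_i\alpha_0^{-1}=\sigma_{i+1}\qquad (1\leq i\leq n-2),
\]
which holds already in the Artin group $B_n$ (push $\sigma_i$ leftwards through the commuting factors $\sigma_{i+2},\ldots,\sigma_{n-1}$, apply \reqref{Artin2} once, and then push the resulting leading $\sigma_{i+1}$ leftwards through $\sigma_1,\ldots,\sigma_{i-1}$), hence descends to $B_n(\St)$. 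Substituting $\alpha_0=\sigma_{n-1}^{-1}$ yields $\sigma_{n-1}^{-1}\sigma_i\sigma_{n-1}=\sigma_{i+1}$ in $Q$. For $1\leq i\leq n-3$ the commutation relation \reqref{Artin1} collapses the left-hand side to $\sigma_i$, giving $\sigma_i=\sigma_{i+1}$; for $i=n-2$ the identity reads $\sigma_{n-2}\sigma_{n-1}=\sigma_{n-1}^2$, i.e.\ $\sigma_{n-2}=\sigma_{n-1}$. Thus all generators coincide in $Q$ with a common element $t$, and $Q=\ang{t}$ is cyclic.

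A cyclic group equals its Abelianisation, so $|Q|=\gcd(n,2)$ by the lower-bound computation, establishing the equality $[B_n(\St):\normcl{\alpha_1}]=\gcd(n,2)$. I expect the main obstacle to be the collapsing step, where the non-Abelian combinatorics of the Artin and surface relations must be arranged so that all $\sigma_i$ become equal in $Q$; the device $\alpha_0=\sigma_{n-1}^{-1}$ in $Q$ is what makes it work. For the final assertion, when $n$ is odd $\gcd(n,2)=1$, so $B_n(\St)=\normcl{\alpha_1}$; combined with \reth{murasugi}, which says that $\alpha_1$ has order $2(n-1)$, this proves $B_n(\St)$ is strongly $2(n-1)$-torsion generated.
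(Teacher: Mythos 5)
Your argument is correct, but note that the paper itself offers no proof of this statement: \repr{bema} is quoted directly from Berrick and Matthey~\cite{BeMa}, so there is no in-paper argument to compare against. What you have written is a valid, self-contained proof, and it closely parallels the techniques the paper uses for the neighbouring results. Your lower bound via the Abelianisation $B_n(\St)\up{Ab}\cong\Z_{2(n-1)}$, sending $\alpha_1$ to $\overline{n}$ with $\gcd(n,2(n-1))=\gcd(n,2)$, is exactly the computation appearing in the proof of \reth{bemaplus}(\ref{it:bemaplusb}); and your collapsing argument in $Q=B_n(\St)/\normcl{\alpha_1}$ is the same device the paper applies to $\normcl{\alpha_2}$ in the proof of \reth{bemaplus}(\ref{it:bemaplusd}), with the pleasant simplification that killing $\alpha_1=\alpha_0\sigma_{n-1}$ yields $\alpha_0=\sigma_{n-1}^{-1}$ in $Q$, so the conjugation identity $\alpha_0\sigma_i\alpha_0^{-1}=\sigma_{i+1}$ (valid in $B_n$, as recorded in the proof of \repr{mingenartin}, hence in its quotient $B_n(\St)$) becomes $\sigma_{n-1}^{-1}\sigma_i\sigma_{n-1}=\sigma_{i+1}$: the far-commutation relations~\reqref{Artin1} give $\sigma_i=\sigma_{i+1}$ for $1\leq i\leq n-3$, and the case $i=n-2$ gives $\sigma_{n-2}=\sigma_{n-1}$ (for $n=3$ only this last case is needed, the earlier range being empty). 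Since $Q$ is then cyclic, hence Abelian, it coincides with its Abelianisation $\Z_{2(n-1)}/\ang{\overline{n}}\cong\Z_{\gcd(n,2)}$ (the image of $\normcl{\alpha_1}$ under Abelianisation is generated by the image of $\alpha_1$, conjugates having equal images), so the two bounds match and the index is exactly $\gcd(n,2)$; combined with the order $2(n-1)$ of $\alpha_1$ from \reth{murasugi}, this gives the strong $2(n-1)$-torsion generation for $n$ odd. In short, your route is a legitimate alternative to citing~\cite{BeMa}, obtained by the same methods the paper uses for $\alpha_0$ and $\alpha_2$.
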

The question of the strong $k$-torsion generation of $B_{n}(\rp)$ is mentioned in~\cite[page~923]{BeMa}, although no result along the lines of \repr{bema} is given. The aim of this paper is to determine in general the numbers $\operatorname{G}(\Gamma)$ and $\operatorname{NG}(\Gamma)$ where $\Gamma= B_{n}(M)$ for $M=\dt,\St$ or $\rp$, and the numbers $\operatorname{TG}(\Gamma)$ and $\operatorname{NTG}(\Gamma)$ where $\Gamma$ is one of $B_{n}(\St)$ or $B_{n}(\rp)$, as well as to find generating sets that realise these quantities. In a similar spirit, we shall also discuss these problems for the corresponding pure braid groups and their Abelianisation. For $B_{n}$, the results are well known, but since we were not able to find a proof in the literature, we shall discuss this case briefly in \resec{artin}.
\begin{prop}Let $n\geq 2$.\label{prop:mingenartin}
\begin{enumerate}
\item\label{it:gensbn} $\operatorname{G}(B_n)=
\begin{cases}
1 & \text{if $n=2$}\\
2 & \text{if $n\geq 3$.}
\end{cases}$ Furthermore, for all $n\geq 2$, $\normcl{\sigma_1}=B_n$, so $\operatorname{NG}(B_n)=1$.
\item\label{it:gensbnb} $\operatorname{NG}(P_n)= \operatorname{G}(P_n)= \operatorname{G}(P_n\up{Ab})=n(n-1)/2$.
\end{enumerate}
\end{prop}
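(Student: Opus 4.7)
The plan is to treat the two parts separately, exploiting the standard Artin presentation of $B_{n}$ for~(a) and the structure of the Abelianisation of $P_{n}$ for~(b).

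For part~(a), the case $n=2$ is immediate since $B_{2}=\ang{\sigma_{1}}\cong\Z$. For $n\geq 3$, the group $B_{n}$ is non-Abelian (for instance, $\sigma_{1}$ and $\sigma_{2}$ have non-commuting images in $\sn$), so $\operatorname{G}(B_{n})\geq 2$. For the reverse inequality, I would set $\delta=\sigma_{1}\sigma_{2}\cdots\sigma_{n-1}$ and verify by a short manipulation of relations~\reqref{Artin1} and~\reqref{Artin2} that $\delta\sigma_{i}\delta^{-1}=\sigma_{i+1}$ for all $1\leq i\leq n-2$; iterating, every standard generator lies in $\ang{\sigma_{1},\delta}$, whence $B_{n}=\ang{\sigma_{1},\delta}$ and $\operatorname{G}(B_{n})=2$. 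For the normal closure statement, rewriting~\reqref{Artin2} as $(\sigma_{i}\sigma_{i+1})\sigma_{i}(\sigma_{i}\sigma_{i+1})^{-1}=\sigma_{i+1}$ shows that $\sigma_{i}$ and $\sigma_{i+1}$ are conjugate in $B_{n}$; consequently every $\sigma_{i}$ lies in $\normcl{\sigma_{1}}$, so $B_{n}=\normcl{\sigma_{1}}$ and $\operatorname{NG}(B_{n})=1$.

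For part~(b), the key input is the classical fact that $P_{n}\up{Ab}\cong \Z^{n(n-1)/2}$. This follows by Abelianising the Artin presentation of $P_{n}$, whose generating set $\set{A_{i,j}}{1\leq i<j\leq n}$ has exactly $n(n-1)/2$ elements and whose defining relations all become trivial in the Abelianisation. Since a free Abelian group of rank $r$ admits no generating set of cardinality smaller than $r$, we have $\operatorname{G}(P_{n}\up{Ab})=n(n-1)/2$. Any normal generating set of $P_{n}$ projects to a set whose image normally generates $P_{n}\up{Ab}$, and in an Abelian group normal generation coincides with generation; hence
\begin{equation*}
\operatorname{NG}(P_{n})\geq \operatorname{G}(P_{n}\up{Ab})=n(n-1)/2.
\end{equation*}
Conversely, the explicit generating set $\set{A_{i,j}}{1\leq i<j\leq n}$ yields $\operatorname{G}(P_{n})\leq n(n-1)/2$, and combining these estimates with the inequality $\operatorname{NG}(P_{n})\leq \operatorname{G}(P_{n})$ from~\reqref{inequgeni} forces the three quantities to coincide and equal $n(n-1)/2$.

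The only genuinely computational point is the verification that conjugation by $\delta$ cyclically permutes the $\sigma_{i}$; everything else is an assembly of standard facts about $B_{n}$ and $P_{n}$ together with the elementary inequalities~\reqref{inequgeni} and~\reqref{inequgen}, so I do not anticipate any serious obstacle.
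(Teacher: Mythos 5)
Your proposal is correct and follows essentially the same route as the paper: for part~(a) it uses the conjugation of the $\sigma_{i}$ by $\delta=\sigma_{1}\cdots\sigma_{n-1}$ to get $B_{n}=\ang{\sigma_{1},\delta}$ (hence also $B_{n}=\normcl{\sigma_{1}}$, your braid-relation conjugacy argument being an equivalent variant), and for part~(b) it computes $P_{n}\up{Ab}\cong\Z^{n(n-1)/2}$ from the standard presentation and sandwiches $\operatorname{NG}(P_{n})$ and $\operatorname{G}(P_{n})$ between $\operatorname{G}(P_{n}\up{Ab})$ and the cardinality of the generating set $\set{A_{i,j}}{1\leq i<j\leq n}$, exactly as in the paper via \repr{mingen}.
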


We turn to the case of the sphere in \resec{murasugi}. The following result extends that of \repr{bema}, and also treats the case of $P_{n}(\St)$. Note that if $n\geq 3$, $\ft$ is the unique torsion element of $P_{n}(\St)$, and so if $n\geq 4$, $P_{n}(\St)$ cannot be (normally) torsion generated. 
\begin{thm}\label{th:bemaplus}
Let $n\geq 3$, and for $i=0,1,2$, let $\alpha_{i}$ be as defined in \reth{murasugi}.
\begin{enumerate}
\item\label{it:bemaplusa} $\operatorname{G}(B_n(\St))=2$, $\operatorname{NG}(B_n(\St))=1$ and $\operatorname{TG}(B_n(\St))=2$.
\item\label{it:bemaplusb} If $n$ is even, there is no torsion element of $B_n(\St)$ whose normal closure is $B_n(\St)$. Furthermore, $B_n(\St)/\normcl{\alpha_1}\cong \Z_2$.
\item\label{it:bemaplusc} $\operatorname{NTG}(B_n(\St))=
\begin{cases}
1 & \text{if $n$ is odd}\\
2 & \text{if $n$ is even.}
\end{cases}$
\item\label{it:bemaplusd} The quotient of $B_n(\St)$ by the normal closure of either $\alpha_0$ or $\alpha_2$ is isomorphic to $\Z_{n-1}$, unless $n=3$ and $i=2$, in which case $B_{3}(\St)/\normcl{\alpha_{2}}\cong \sn[3]$.
\item\label{it:bemapluse} $\operatorname{G}(P_n(\St))=\operatorname{NG}(P_n(\St))=\operatorname{G}(P_n(\St)\up{Ab})=(n(n-3)+2)/2$. 
\end{enumerate}
\end{thm}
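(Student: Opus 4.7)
The plan is to prove the five parts in sequence, using Murasugi's classification of torsion (\reth{murasugi}), the generating pair $\{\alpha_{0},\alpha_{1}\}$ from~\cite[Theorem~3]{GG3}, and the abelianisation $B_{n}(\St)\up{Ab}\cong \Z_{2(n-1)}$ as the principal source of lower bounds. For part~\textbf{(a)}, the Artin relation~\reqref{Artin2} renders all the $\sigma_{i}$ mutually conjugate, so $\normcl{\sigma_{1}}=B_{n}(\St)$ and $\operatorname{NG}(B_{n}(\St))=1$; the equality $\operatorname{G}(B_{n}(\St))=2$ combines \cite[Theorem~3]{GG3} with the non-cyclicity of $B_{n}(\St)$ for $n\geq 3$ (dicyclic of order $12$ when $n=3$, infinite non-abelian otherwise), while $\operatorname{TG}(B_{n}(\St))=2$ follows since $\alpha_{0},\alpha_{1}$ are torsion and~\reqref{inequgeni} applies.

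For parts~\textbf{(b)} and~\textbf{(c)}, the key computation is the image of each $\alpha_{i}$ in $B_{n}(\St)\up{Ab}\cong \Z_{2(n-1)}$: since abelianisation sends every $\sigma_{j}$ to a single generator, one reads $\alpha_{0}\mapsto n-1$, $\alpha_{1}\mapsto n$ and $\alpha_{2}\mapsto n-1$. By~\reth{murasugi}, every torsion element is conjugate to some $\alpha_{i}^{k}$, hence its abelian image lies in $\ang{n-1}$ or $\ang{n}$. For $n$ even, $\gcd(k(n-1),2(n-1))\geq n-1\geq 2$ and $\gcd(kn,2(n-1))\geq 2$, so no such image generates $\Z_{2(n-1)}$, proving the first assertion of~\textbf{(b)}; the isomorphism $B_{n}(\St)/\normcl{\alpha_{1}}\cong \Z_{2}$ is then \repr{bema} with $\gcd(n,2)=2$. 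Part~\textbf{(c)} is immediate: \repr{bema} gives $\operatorname{NTG}=1$ for $n$ odd, and for $n$ even, \textbf{(b)} forces $\operatorname{NTG}\geq 2$ while the torsion pair $\alpha_{0},\alpha_{1}$ realises $\operatorname{NTG}\leq 2$.

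For part~\textbf{(d)}, the strategy for $\alpha_{0}$ is to exploit the conjugation formula $\alpha_{0}\sigma_{i}\alpha_{0}^{-1}=\sigma_{i+1}$ (valid in $B_{n}$ for $1\leq i\leq n-2$): setting $\alpha_{0}=1$ in the quotient collapses $\sigma_{1}=\cdots=\sigma_{n-1}=:\sigma$, the defining relation becomes $\sigma^{n-1}=1$, and since the abelianisation of the quotient is already $\Z_{2(n-1)}/\ang{n-1}\cong \Z_{n-1}$, the quotient is exactly $\Z_{n-1}$. For $\alpha_{2}$ the approach is analogous but more delicate since $\sigma_{n-1}$ does not appear in $\alpha_{2}$: I would use $\alpha_{2}^{n-2}=\ft$ (so $\ft=1$ in the quotient) together with conjugates of $\alpha_{2}$ by powers of $\alpha_{0}$ and by the Garside element~\reqref{defgarside} to obtain ``shifted'' instances of the $\alpha_{2}$-relation, and combine these with the sphere relation~\reqref{surface} to again force all $\sigma_{i}$ to coincide, yielding $\Z_{n-1}$. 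The exceptional case $n=3$ must be treated separately because $\alpha_{2}=\sigma_{1}^{2}$; imposing $\sigma_{i}^{2}=1$ alongside the Artin and sphere relations then recovers the standard Coxeter presentation of $\sn[3]$. The main technical obstacle is the $\alpha_{2}$ calculation for general $n$, where tracking which combinations of shifted relations force $\sigma_{n-2}=\sigma_{n-1}$ requires careful bookkeeping.

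For part~\textbf{(e)}, the lower bound $\operatorname{NG}(P_{n}(\St))\geq \operatorname{G}(P_{n}(\St)\up{Ab})=(n-1)(n-2)/2$ is the tautology that normal generation descends to abelian generation, combined with Smith-normal-form reduction of the abelianisation coming from the standard Fadell--Van Buskirk presentation of $P_{n}(\St)$. For the upper bound $\operatorname{G}(P_{n}(\St))\leq (n-1)(n-2)/2$, one exhibits an explicit generating set of that size: among the $\binom{n}{2}$ standard pure braid generators $A_{i,j}$ there are $n-1$ independent surface relations coming from the sphere (one per strand), allowing one to eliminate $n-1$ of the $A_{i,j}$ and leaving exactly $\binom{n}{2}-(n-1)=(n-1)(n-2)/2$ elements that continue to generate $P_{n}(\St)$. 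Chaining with~\reqref{inequgeni} then forces equality of all three quantities.
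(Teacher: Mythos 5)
Parts~(a)--(c) of your proposal are correct and essentially coincide with the paper's own argument (abelianisation images of the $\alpha_{i}$, Murasugi's classification, and \repr{bema}), and your treatment of $\normcl{\alpha_{0}}$ in part~(d) is a valid minor variant (the paper uses $B_{n}(\St)=\ang{\alpha_{0},\alpha_{1}}$ and \req{rootsft} instead of the conjugation formula, but both give a cyclic quotient of order at most $n-1$ surjecting onto $\Z_{n-1}$). The genuine gap is the case $i=2$, $n\geq 4$ of part~(d). There you only announce a strategy -- use $\alpha_{2}^{n-2}=\ft$ and conjugates of $\alpha_{2}$ by powers of $\alpha_{0}$ and by the Garside element, then ``careful bookkeeping'' -- and you explicitly leave that bookkeeping undone; but this computation is the technical heart of the statement, not a routine afterthought. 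Note also that conjugation by $\alpha_{0}$ does not shift $\sigma_{n-1}$ to a standard generator, so after the first step the ``shifted instances'' of the $\alpha_{2}$-relation are not simply available in the form you suggest. The paper instead works directly in $Q=B_{n}(\St)/\normcl{\alpha_{2}}$: multiplying $\alpha_{2}=1$ on the left by $\sigma_{n-2}$ and using relation~\reqref{Artin2} gives $\sigma_{n-3}=\sigma_{n-2}$ in $Q$, and an induction using only~\reqref{Artin1} and~\reqref{Artin2} then forces $\sigma_{1}=\cdots=\sigma_{n-1}$ in $Q$, whence $Q$ is cyclic with $\sigma^{n-1}=1$ and the surjection onto $\Z_{n-1}$ concludes. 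As written, your proposal does not establish this case (your $n=3$, $i=2$ argument via the Coxeter presentation of $\sn[3]$ is fine).

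In part~(e) your value is correct (indeed $(n-1)(n-2)/2=(n(n-3)+2)/2$) and the elimination of $n-1$ of the $A_{i,j}$ via the strand relations gives the upper bound, but the lower bound is merely asserted. The phrase ``Smith-normal-form reduction of the standard presentation'' conceals exactly the delicate point: one must show that $(P_{n}(\St))\up{Ab}$ is $\Z^{n(n-3)/2}\oplus\Z_{2}$, the $\Z_{2}$ factor (generated by the class of $\ft$) being indispensable -- without it the abelianisation would only force $n(n-3)/2$ generators, one fewer than needed. Moreover, running such a reduction requires an explicit presentation of $P_{n}(\St)$ and a check of which relations survive abelianisation (the $B_{n}$-conjugates of the surface relator contribute all $n$ strand vectors, not just one). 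The paper sidesteps this by invoking the splitting $P_{n}(\St)\cong P_{n-3}(\St\setminus\brak{x_{1},x_{2},x_{3}})\oplus\Z_{2}$ of~\cite{GG1} together with the presentation of the punctured-sphere pure braid group from~\cite{GG3}, from which both the free rank and the $\Z_{2}$ summand are read off directly. To complete your route you should either carry out the abelianisation computation explicitly, verifying the $\Z_{2}$ torsion, or quote the decomposition as the paper does.
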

Part~(\ref{it:bemaplusc}) includes Berrick and Matthey's result given in \repr{bema}.

In \resec{rp2}, we consider the braid groups of $\rp$, and obtain the following results: 
\begin{thm}\label{th:genrp}
Let $n\geq 2$, and let $a$ and $b$ be as given in \req{defab}.
\begin{enumerate}
\item\label{it:genrpa} The group $B_{n}(\rp)$ is generated by $\brak{a,b}$, and $\operatorname{G}(B_n(\rp))=\operatorname{TG}(B_n(\rp))=2$.
\item\label{it:normgenrp2} The normal closure of any element of $B_n(\rp)$ is a proper subgroup of $B_n(\rp)$, and $\operatorname{NG}(B_n(\rp))=\operatorname{NTG}(B_n(\rp))=2$. In particular, $B_n(\rp)$ is not strongly $k$-torsion generated for any $k\in\N$.
\item\label{it:torsnormgenrp2} The quotient of $B_n(\rp)$ by either $\normcl{a}$ or by $\normcl{b}$ is isomorphic to $\Z_2$. 
\item\label{it:genrpd} The group $P_{n}(\rp)$ is torsion generated by the following set of torsion elements:
\begin{equation}\label{eq:torgenrp2}
Y=\brak{a^{n}, b^{n-1}, a^{-1}b^{n-1}a,\ldots
,a^{-(n-2)}b^{n-1}a^{n-2}}
\end{equation}
of order $4$. Further,
\begin{equation}\label{eq:genspnrp2}
\operatorname{G}(P_{n}(\rp))= \operatorname{NG}(P_{n}(\rp))=\operatorname{TG}(P_{n}(\rp))= \operatorname{NTG}(P_{n}(\rp))=n.
\end{equation}
In particular, $P_{n}(\rp)$ cannot be normally generated by any subset consisting of less than $n$ elements.
\end{enumerate}
\end{thm}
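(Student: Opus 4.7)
The plan splits into the four parts. A common theme is that $B_n(\rp)^{\text{Ab}}\cong\Z_2\oplus\Z_2$ (noted just after \repr{presvb}) and a natural surjection $P_n(\rp)\twoheadrightarrow\Z_2^n$ will supply all the required lower bounds, while the upper bounds will come from explicit generating sets whose verification is the main work.

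For parts~(\ref{it:genrpa}) and~(\ref{it:normgenrp2}), the first task is to show $\ang{a,b}=B_n(\rp)$. A direct computation inside \repr{presvb} gives $b a^{-1}=\rho_{n-1}\sigma_{n-1}^{-1}\rho_n^{-1}$ after cancelling $(\sigma_{n-2}\cdots\sigma_1)(\sigma_{n-2}\cdots\sigma_1)^{-1}$, and applying relation~(iii) in the form $\rho_{n-1}\sigma_{n-1}^{-1}=\sigma_{n-1}\rho_n$ collapses this to $ba^{-1}=\sigma_{n-1}$. Since $\pi(a)=(1,\dots,n)$ and $\pi(b)=(1,\dots,n-1)$ generate $\sn$, conjugating $\sigma_{n-1}$ by suitable words in $a,b$ produces each $\sigma_i$; then $\rho_n=a(\sigma_{n-1}\cdots\sigma_1)^{-1}$, and iterating relation~(iii) yields every $\rho_j$. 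The non-cyclicity of $\Z_2\oplus\Z_2$ gives $\operatorname{G}(B_n(\rp))\geq 2$, and the same obstruction shows $\operatorname{NG}(B_n(\rp))\geq 2$ (the image of $\normcl{g}$ is cyclic, hence proper in $\Z_2\oplus\Z_2$), which simultaneously rules out strong $k$-torsion generation. Since $a$ and $b$ are torsion, all four quantities in these two parts equal $2$.

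For part~(\ref{it:torsnormgenrp2}), I would treat $\normcl{a}$ (the case $\normcl{b}$ being analogous). Adding the relator $a=1$ to \repr{presvb} forces, via relation~(v), $\rho_n=\sigma_1^{-1}\cdots\sigma_{n-1}^{-1}$; iterating relation~(iii) then expresses every $\rho_i$ as a word in the $\sigma_j$, and substituting into relation~(iv) produces relations forcing each $\sigma_i^2=1$ and all $\sigma_i$ equal in the quotient. The result is $\Z_2$, in agreement with the a priori abelianisation computation (the image of $a$ in $\Z_2\oplus\Z_2$ is of order $2$, so the Abelianised quotient is $\Z_2$). The careful rewriting of relation~(iv) after the substitution is, I expect, the most delicate step.

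For part~(\ref{it:genrpd}), the lower bound uses the natural homomorphism $P_n(\rp)\to\pi_1(\rp)^n=\Z_2^n$ (tracking the class in $\pi_1(\rp)$ of each string), under which the $\rho_i$ map to the standard basis; hence $\operatorname{G}(P_n(\rp))\geq\operatorname{G}(\Z_2^n)=n$, and \reqref{inequgeni}--\reqref{inequgen} then pin the same lower bound on all four quantities. For the upper bound, I verify that the set $Y$ of~\reqref{torgenrp2} generates $P_n(\rp)$: the telescoping $AB^{-1}=\rho_n\cdots\rho_1\cdot\rho_1^{-1}\cdots\rho_{n-1}^{-1}=\rho_n$ places $\rho_n$ in $\ang{Y}$, and the iterated conjugates $a^{-i}Ba^i$ are analysed using relation~(iii) to extract each remaining $\rho_i$. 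Once all $\rho_i$ lie in $\ang{Y}$, relation~(iv) puts every $\sigma_i^2$ there, and the $\rho_i$ together with the $\sigma_i^2$ are known to generate $P_n(\rp)$ (from the presentation of~\cite[Theorem~4]{GG8}). Since $Y$ consists of $n$ torsion elements, $\operatorname{TG}(P_n(\rp))\leq n$, and the four numbers coincide. The principal obstacle is the conjugation bookkeeping for the $a^{-i}Ba^i$, which must be done carefully inside the presentation.
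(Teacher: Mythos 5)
Your overall architecture coincides with the paper's (abelianisation lower bounds, $ba^{-1}=\sigma_{n-1}$, conjugates of $B=b^{n-1}$ by powers of $a$), but some key steps are justified by arguments that do not work as stated. In part~(\ref{it:genrpa}), you deduce that every $\sigma_i$ lies in $\ang{a,b}$ from the fact that $\pi(a)$ and $\pi(b)$ generate $\sn$. That is not enough: conjugating $\sigma_{n-1}$ by an element whose permutation carries $\{n-1,n\}$ to $\{i,i+1\}$ yields a half-twist about \emph{some} arc joining the $i\th$ and $(i+1)\up{st}$ marked points, not in general the standard generator $\sigma_i$, and without the standard generators you cannot then invoke relations~(iii)--(v) to pull in the $\rho_j$. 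What is actually needed --- and what the paper uses, citing~\cite[p.~777]{GG2} --- is the explicit identity $a^{j}\sigma_{n-1}a^{-j}=\sigma_{n-1-j}$ (and the companion formula $a^{j}\rho_{n}a^{-j}=\rho_{n-j}$), which follows from the special form $a=\rho_{n}\sigma_{n-1}\cdots\sigma_{1}$ together with relations~(i)--(iii); the permutation-level argument must be replaced by this computation.

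The same conjugation formula is also the missing ingredient in part~(\ref{it:genrpd}), where the bookkeeping you defer is exactly the substance of the proof: relation~(iii) cannot be the tool for ``extracting each remaining $\rho_i$'', because the $\sigma_i$ do not lie in $\ang{Y}$. The correct analysis uses $A\ldotp\bigl(a^{-j}Ba^{j}\bigr)^{-1}=a^{-j}\rho_{n}a^{j}$ (since $A=a^{n}$ commutes with $a$), and since $Y$ contains only the conjugates with $0\leq j\leq n-2$, these give just $n-1$ of the $\rho_i$; the last one must be recovered from $A=\rho_{n}\cdots\rho_{1}$ --- this is precisely the point that makes an $n$-element torsion generating set possible, and it is absent from your sketch. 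Your closing claim that the $\rho_i$ together with the $\sigma_i^{2}$ generate $P_n(\rp)$ is true but not immediate from the presentation you cite: relation~(iv) only yields the adjacent $A_{i,i+1}$, and one needs $A_{i,j}=\rho_{j}^{-1}\rho_{i}^{-1}\rho_{j}\rho_{i}$ for all $i<j$ (\cite[Lemma~17]{GG2}), which in fact shows that the $\rho_i$ alone generate. Finally, in part~(\ref{it:torsnormgenrp2}) you leave the decisive step (``rewriting relation~(iv)'') undone, when no such computation is needed: since $B_n(\rp)=\ang{a,b}$, the quotient by $\normcl{a}$ is generated by the image of $b$, hence cyclic, hence abelian, so it is the quotient of $\Z_2\oplus\Z_2$ by the order-two image of $a$, i.e.\ $\Z_2$; your abelianisation remark by itself only identifies the abelianisation of the quotient, not the quotient.
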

Part~(\ref{it:normgenrp2}) thus answers negatively the underlying question of~\cite{BeMa} concerning the strong $k$-torsion generation of $B_{n}(\rp)$.

In \resec{mcg}, \repr{mingenartin} and Theorems~\ref{th:bemaplus} and~\ref{th:genrp} will be applied to obtain similar results for the mapping class and pure mapping class groups of the given surfaces. As another application, in \resec{actionh3} we prove the triviality of the action on homology of the universal covering of the configuration space of the braid groups of $\St$ and $\rp$:
\begin{prop}\label{prop:acthom}
Let $M$ be equal either to $\St$ or to $\rp$, let $n\geq 3$ if $M=\St$ and $n\geq 2$ if $M=\rp$, and let $H$ be any subgroup of $B_{n}(M)$. Then the induced action of $H$ on $H_3(\widetilde{F_n(M)};\Z)\cong \Z$ is trivial. In particular the action of $P_n(M)$ and $B_n(M)$ on $H_3(\widetilde{F_n(M)};\Z)$ is trivial.  
 \end{prop}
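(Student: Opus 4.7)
The proof rests on two structural facts. First, $H_{3}(\widetilde{F_{n}(M)};\Z) \cong \Z$ will follow from Hurewicz applied to the $2$-connected space $\widetilde{F_{n}(M)}$. Second, any action on $\Z$ factors through $\aut{\Z} = \Z_{2}$, so the claim for an arbitrary subgroup $H \leq B_{n}(M)$ reduces to showing that each element of some generating set of $B_{n}(M)$ acts trivially.

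\emph{Computing $H_{3}$.} Iterating the Fadell--Neuwirth fibrations $F_{k}(M) \to F_{k-1}(M)$, whose fibres $M \setminus \{k-1 \text{ points}\}$ are aspherical open surfaces in the relevant range, yields $\pi_{3}(F_{n}(M)) \cong \pi_{3}(M) \cong \Z$. A parallel analysis of the $\pi_{2}$-row---using $F_{3}(\St) \simeq \mathbb{R}P^{3}$ for $M = \St$, and for $M = \rp$ an explicit computation of the Fadell--Neuwirth boundary $\pi_{2}(\rp) \to \pi_{1}(\rp \setminus \text{pt})$ via $P_{2}(\rp) \cong \quat$---shows $\pi_{2}(F_{n}(M)) = 0$, so Hurewicz yields $H_{3}(\widetilde{F_{n}(M)}) \cong \pi_{3}(F_{n}(M)) \cong \Z$.

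\emph{Showing the generators act trivially.} For $\sigma_{n-1}$, consider the Fadell--Neuwirth projection $p \colon F_{n}(M) \to F_{n-2}(M)$; its fibre $F_{2}$ of an open aspherical surface is aspherical, so $p$ induces an isomorphism on $\pi_{3}$. Since $p$ is $(\sn[n-2] \times \sn[2])$-equivariant, it descends to a fibration $F_{n}(M)/(\sn[n-2] \times \sn[2]) \to D_{n-2}(M)$, and on fundamental groups this is the ``forget-the-last-two-strands'' homomorphism $H \to B_{n-2}(M)$, where $H \subset B_{n}(M)$ is the subgroup of braids whose permutation lies in $\sn[n-2] \times \sn[2]$. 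Because $\sigma_{n-1} \in H$ has trivial image in $B_{n-2}(M)$, it acts trivially on $H_{3}$. By the braid relation all $\sigma_{i}$ are conjugate in $B_{n}(M)$, so---since $\aut{\Z}$ is abelian---they all act identically, hence all trivially. For $M = \rp$, the analogue using $F_{n}(\rp) \to F_{n-1}(\rp)$ shows that $\rho_{n} \in P_{n}(\rp)$ has trivial image in $P_{n-1}(\rp)$ and so acts trivially; the relation $\rho_{i} = \sigma_{i} \rho_{i+1} \sigma_{i}$ of \repr{presvb}, together with triviality of the $\sigma_{i}$-action, propagates triviality to every $\rho_{j}$.

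\emph{The case $M = \rp$, $n = 2$, and the main obstacle.} Here the reduction to $B_{n-2}$ collapses, so $\sigma_{1} \in B_{2}(\rp)$ needs separate treatment. I would bootstrap from the now-established $n = 3$ case: the $\sn[2]$-equivariant projection $F_{3}(\rp) \to F_{2}(\rp)$ descends to $F_{3}(\rp)/\sn[2] \to D_{2}(\rp)$, inducing a surjection from an index-$3$ subgroup of $B_{3}(\rp)$ onto $B_{2}(\rp)$ along which the triviality of the $B_{3}(\rp)$-action on $H_{3}(\widetilde{F_{3}(\rp)})$ transfers to triviality of the $B_{2}(\rp)$-action on $H_{3}(\widetilde{F_{2}(\rp)})$. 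The main obstacle is the careful bookkeeping needed to identify the intermediate configuration-space quotients $F_{n}(M)/K$ with the appropriate subgroups of $B_{n}(M)$, together with the low-degree $\pi_{2}$-vanishing required for Hurewicz.
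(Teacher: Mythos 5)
Your proposal is correct, but it follows a genuinely different route from the paper. The paper's proof is a deliberately short application of its main theorems: it quotes the fact that $\widetilde{F_n(M)}=\widetilde{D_n(M)}$ is a finite-dimensional complex of the homotopy type of $\St[3]$, invokes Brown's result~\cite{Br} that a finite-order element acting freely on such a homotopy sphere acts trivially on its top homology, and then uses torsion generation of $B_{n}(\St)$ (by $\alpha_{0},\alpha_{1}$, \cite{GG3}) and of $B_{n}(\rp)$ (by $a,b$, \reth{genrp}(\ref{it:genrpa})) to write every element as a product of torsion elements, each acting trivially. You instead never use torsion at all: you identify the deck action on $H_{3}(\widetilde{F_n(M)})$ with the $\pi_{1}$-action on $\pi_{3}$ (legitimate, since your Fadell--Neuwirth computations give $2$-connectivity of the universal cover and $\pi_{3}\cong\Z$, so Hurewicz applies equivariantly), and then kill the standard generators by naturality along strand-forgetting maps: asphericity of the fibres gives injectivity on $\pi_{3}$ (isomorphism is not even needed), $\sigma_{n-1}$ and $\rho_{n}$ die in the base, conjugacy of the $\sigma_{i}$ plus commutativity of $\aut{\Z}\cong\Z_{2}$ handles the remaining $\sigma_{i}$, relation~(\emph{iii}) of \repr{presvb} handles the remaining $\rho_{j}$, and the bootstrap through the index-$3$ subgroup of $B_{3}(\rp)$ correctly covers $B_{2}(\rp)$, where the forget-two-strands reduction degenerates. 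What each approach buys: the paper's argument is two lines long once its torsion-generation theorems and the known homotopy type $\St[3]$ are in hand, and that is precisely the point of its Section~6 (the result is presented as an application of torsion generation); your argument is more self-contained, needs only $2$-connectivity of $\widetilde{F_n(M)}$ and $\pi_{3}\cong\Z$ rather than the full homotopy equivalence with $\St[3]$, avoids Brown's theorem, and applies verbatim to $P_{n}(\St)$ (which the paper must treat by the subgroup remark, since $P_{n}(\St)$ is not torsion generated for $n\geq 4$), at the cost of the fibration and covering-space bookkeeping you acknowledge.
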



\subsection*{Acknowledgements}

This work took place during the visits of the first author to the Laboratoire de Math\'ematiques Nicolas Oresme, Universit\'e de Caen Basse-Normandie during the period 5\up{th} November~--~5\up{th} December 2010, and of the second author to the Departmento de Matem\'atica do IME-Universidade de S\~ao Paulo during the periods 12\up{th}~--~25\up{th}~April and 4\up{th}~--~26\up{th} October 2010, and 24\up{th} February~--~6\up{th} March 2011. It was supported by the international Cooperation CNRS/FAPESP project number 09/54745-1, by the ANR project TheoGar project number ANR-08-BLAN-0269-02, and by the FAPESP ``Projecto Tem\'atico Topologia Alg\'ebrica, Geom\'etrica e Differencial'' number  2008/57607-6.

\section{Generating sets of $B_{n}$ and $P_{n}$}\label{sec:artin}

It is a classical fact that $B_{n}$ and $P_{n}$ are finitely presented, a presentation of the latter being given in~\cite[Appendix~1]{H}. Although the results of \repr{mingenartin} concerning the Artin braid groups are well known, we were not able to find an explicit statement in the literature, so we give a brief account here. Before doing so, we state the following result which allows us to compare the minimal number of generators of a group and its Abelianisation, and which will prove to be useful in what follows.

\begin{prop}\label{prop:mingen} 
Let $\Gamma_1,\Gamma_2$ be groups, and let $\map{\phi}{\Gamma_1}[\Gamma_2]$ be a surjective group homomorphism. If $\Gamma_1$ is finitely generated then so is $\Gamma_{2}$, and we have the inequalities $\operatorname{G}(\Gamma_1) \geq \operatorname{G}(\Gamma_2)$ and $\operatorname{NG}(\Gamma_1) \geq \operatorname{NG}(\Gamma_2)$. In particular, if $\Gamma_{2}=\Gamma_{1}\up{Ab}$ and $\phi$ is Abelianisation, $\operatorname{G}(\Gamma_1) \geq \operatorname{NG}(\Gamma_1)\geq \operatorname{G}(\Gamma_1\up{Ab})$.
\end{prop}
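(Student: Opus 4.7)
The plan is to unwind the definitions and exploit the surjectivity of $\phi$ in a direct way; no machinery is needed.

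First I would show that if $X\subseteq\Gamma_{1}$ generates (respectively normally generates) $\Gamma_{1}$, then $\phi(X)\subseteq\Gamma_{2}$ generates (respectively normally generates) $\Gamma_{2}$. Since $\phi$ is a homomorphism, $\phi(\ang{X})=\ang{\phi(X)}$ and $\phi(\normcl{X})\subseteq \normcl{\phi(X)}$; combined with surjectivity of $\phi$, this gives $\ang{\phi(X)}=\phi(\ang{X})=\phi(\Gamma_{1})=\Gamma_{2}$ in the generating case. For the normal case, any $g\in\Gamma_{2}$ can be lifted to some $\tilde{g}\in\Gamma_{1}$, which is a product of conjugates of elements of $X^{\pm 1}$, and applying $\phi$ shows that $g$ is a product of conjugates of elements of $\phi(X)^{\pm 1}$, so $\normcl{\phi(X)}=\Gamma_{2}$. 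Since $\ord{\phi(X)}\leq \ord{X}$ in both cases, taking $X$ to realise the minima $\operatorname{G}(\Gamma_{1})$ or $\operatorname{NG}(\Gamma_{1})$ produces generating/normally generating sets of $\Gamma_{2}$ of cardinality at most $\operatorname{G}(\Gamma_{1})$ or $\operatorname{NG}(\Gamma_{1})$ respectively, yielding the two displayed inequalities. In particular, if $\Gamma_{1}$ is finitely generated, this also proves that $\Gamma_{2}$ is.

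For the final statement, the inequality $\operatorname{G}(\Gamma_{1})\geq \operatorname{NG}(\Gamma_{1})$ is exactly~\reqref{inequgeni}. The remaining inequality $\operatorname{NG}(\Gamma_{1})\geq \operatorname{G}(\Gamma_{1}\up{Ab})$ follows from the previous paragraph applied to the Abelianisation homomorphism $\map{\phi}{\Gamma_{1}}[\Gamma_{1}\up{Ab}]$, together with the observation that in the Abelian group $\Gamma_{1}\up{Ab}$ every subgroup is normal, so $\normcl{Y}=\ang{Y}$ for every $Y\subseteq \Gamma_{1}\up{Ab}$, and hence $\operatorname{NG}(\Gamma_{1}\up{Ab})=\operatorname{G}(\Gamma_{1}\up{Ab})$.

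There is no real obstacle here; the only point requiring a modicum of care is the normal-generation step, where one must explicitly use that $\phi$ carries a conjugate $h x h^{-1}$ to $\phi(h)\phi(x)\phi(h)^{-1}$ and invoke the surjectivity of $\phi$ to hit an arbitrary element of $\Gamma_{2}$ by a word in conjugates of $\phi(X)^{\pm 1}$.
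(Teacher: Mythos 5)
Your argument is correct and is exactly the routine verification the authors had in mind: the paper's own proof of this proposition consists of the single word ``Straightforward''. Pushing a (normal) generating set forward along the surjection $\phi$, noting $\ord{\phi(X)}\leq\ord{X}$, and using that normal closures coincide with generated subgroups in the Abelian quotient is precisely the intended argument, so there is nothing to compare or correct.
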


\begin{proof}
Straightforward.
\end{proof}

We now prove \repr{mingenartin}.

\begin {proof}[Proof of \repr{mingenartin}.]\mbox{}
\begin{enumerate}[(a)]
\item If $n=2$ then $B_{2}\cong \Z$, and the statement follows easily. The case $n\geq 3$ is a consequence of~\cite[Chapter~2, Section~2, Exercise~2.4]{M2} of which a generalisation may be found in~\cite[Lemma~29]{GG13}. Indeed, by induction we have:
\begin{equation*}
(\sigma_1\sigma_2\cdots\sigma_{n-1})^{i} \sigma_{1}(\sigma_1\sigma_2\cdots\sigma_{n-1})^{-i}=\sigma_{i+1} \quad\text{for $i=1,\ldots, n-2$,} 
\end{equation*}
which implies that $B_{n}=\ang{\sigma_1, \sigma_1\sigma_2\cdots\sigma_{n-1}}$. But $B_{n}$ is non cyclic, and so $\operatorname{G}(B_{n})=2$. This calculation also shows that $B_{n}=\normcl{\sigma_{1}}$, and thus $\operatorname{NG}(B_{n})=1$.
\item Using the presentation of $P_{n}$ given in~\cite[Appendix~1]{H}, we see that that $P_n\up{Ab}$ is a free Abelian group of rank $n(n-1)/2$   with a basis comprised of the $\Gamma_{2}(P_{n})$-cosets of the standard generators  
\begin{equation}\label{eq:defaij}
A_{i,j}=\sigma_{j-1}\cdots \sigma_{i+1}\sigma_{i}^{2} \sigma_{i+1}^{-1}\cdots \sigma_{j-1},\, 1\leq i<j\leq n,
\end{equation}
of $P_{n}$, and denoted in~\cite{H} by $a_{i,j}$ (here $\Gamma_{2}(P_{n})$ denotes the commutator subgroup of $P_{n}$). Thus $\operatorname{G}(P_n)\geq \operatorname{NG}(P_n)\geq \operatorname{G}(P_n\up{Ab})=n(n-1)/2$ by \repr{mingen}. But $P_n$ is generated by the $A_{i,j}$, so $n(n-1)/2\geq \operatorname{G}(P_n)$, and the result follows.\qedhere
\end{enumerate}
\end{proof}

\section{Generating sets of $B_{n}(\St)$ and $P_{n}(\St)$}\label{sec:murasugi}

%
%

Since the group $B_n(\St)$ is a quotient of $B_n$ and $B_{n}(\St)$ is not cyclic if $n\geq 3$, the statement of \repr{mingenartin}(\ref{it:gensbn}) remains true if we replace $B_n$ by $B_n(\St)$. But for the case of $B_n(\St)$, we can refine our analysis by looking at generating sets consisting of elements of finite order. It was proved in~\cite[Theorem~3]{GG3} that $B_n(\St)=\ang{\alpha_0,\alpha_1}$, $\alpha_0$ and $\alpha_1$ being the elements of \reth{murasugi}. By \repr{bema}, if $n$ is odd, $B_{n}(\St)=\normcl{\alpha_{1}}$, and thus $B_n(\St)$ is strongly $2(n-1)$-torsion generated in this case. If $n$ is even, the same proposition shows that $\normcl{\alpha_{1}}$ is a subgroup of $B_{n}(\St)$ of index $2$. In \reth{bemaplus}, we improve these results somewhat. We now proceed with the proof of that theorem.

\begin{proof}[Proof of \reth{bemaplus}.] \mbox{}
\begin{enumerate}[(a)]
\item Propositions~\ref{prop:mingenartin} and~\ref{prop:mingen} imply that $\operatorname{G}(B_{n}(\St))=2$ and $\operatorname{NG}(B_{n}(\St))=1$, using the fact that $B_{n}(\St)$ is non cyclic and is a quotient of $B_{n}$. Since $B_{n}(\St)=\ang{\alpha_{0}, \alpha_{1}}$~\cite[Theorem~3]{GG3}, it then follows that $\operatorname{TG}(B_{n}(\St))=2$.
\item Equations~\reqref{Artin1},~\reqref{Artin2} and~\reqref{surface} imply that for $n\geq 2$, $(B_{n}(\St))\up{Ab}\cong \Z_{2(n-1)}$, where the Abelianisation homomorphism $\map{\phi}{B_{n}(\St)}[(B_{n}(\St))\up{Ab}]$ identifies the standard generators $\sigma_{1},\ldots, \sigma_{n-1}$ of $B_{n}(\St)$ to the generator $\overline{1}$ of $\Z_{2(n-1)}$, and sends $\alpha_0, \alpha_1$ and $\alpha_2$ to  $\overline{n-1}$, $\overline{n}$ and $\overline{n-1}$ respectively.  If $i\in\brak{0,2}$, or if $i=1$ and $n$ is even then $\ang{\phi(\alpha_{i})}\neq (B_{n}(\St))\up{Ab}$, and so $\normcl{\alpha_{i}}\neq B_{n}(\St)$. The first part of the statement is then a consequence of \reth{murasugi}, and the second part follows from \repr{bema}.

\item If $n$ is odd, the result follows from \repr{bema}, while if $n$ is even, we see by \req{inequgeni} that $\operatorname{NTG}(B_{n}(\St))=2$ because $\operatorname{G}(B_{n}(\St))=2$ and $\operatorname{NTG}(B_{n}(\St))\neq 1$ by part~(\ref{it:bemaplusb}).

\item We first treat the exceptional case $n=3$ and $i=2$. In this case, $\alpha_{2}=\sigma_{1}^{2}$ is a non-trivial torsion element of $P_{3}(\St)$, so is equal to $\ft[3]$, which generates $P_{3}(\St)$. The fact that $B_{3}(\St)/\normcl{\alpha_{2}}\cong \sn[3]$ is a consequence of \req{defperm}. So suppose that $i\in\brak{0,2}$, but with $i=0$ if $n=3$. The map which sends each $\sigma_j$,  $j=1, \ldots, n-1$, to the generator $\overline{1}$ of $\Z_{n-1}$ extends to a surjective homomorphism $\map{\psi}{B_n(\St)}[\Z_{n-1}]$. Since $\psi(\alpha_{i})=\overline{0}$ and the target is Abelian, we see that $\normcl{\alpha_{i}}\subset \ker{\psi}$ and thus $\psi$ factors through $B_n(\St)/\normcl{\alpha_{i}}$. In particular, the order of $B_n(\St)/\normcl{\alpha_{i}}$ is at least $n-1$. If $i=0$, the fact that $B_{n}(\St)=\ang{\alpha_{0},\alpha_{1}}$ implies that $B_n(\St)/\normcl{\alpha_{0}}$ is generated by the coset of $\alpha_{1}$, so is cyclic. Using \req{rootsft}, it follows that $B_n(\St)/\normcl{\alpha_{0}}$ is of order at most $n-1$, from which we deduce that it is cyclic of order exactly $n-1$. Now suppose that $i=2$, so $n\geq 4$, and let $Q=B_n(\St)/\normcl{\alpha_2}$. By abuse of notation, we shall denote the projection of an element $x\in B_n(\St)$ in $Q$ by $x$. We will show that
\begin{equation}\label{eq:sigmaQ}
\sigma_1=\cdots= \sigma_{n-1} \;\text{in $Q$.} 
\end{equation}
To do so, we shall prove by induction that $\sigma_{n-2-i}=\sigma_{n-2}$ in $Q$ for all $1\leq i\leq n-3$. 
For the case $i=1$, we first multiply the relation $\alpha_{2}=1$ in $Q$ on the left-hand side by $\sigma_{n-2}$, so:
\begin{align*}
\sigma_{n-2}&= \sigma_{n-2}\alpha_{2}=\sigma_{n-2} \sigma_1 \sigma_2\cdots\sigma_{n-3}\sigma_{n-2}\sigma_{n-2}= \sigma_1 \sigma_2\cdots\sigma_{n-4}\sigma_{n-2}\sigma_{n-3}\sigma_{n-2} \sigma_{n-2}\\
&=\sigma_1 \sigma_2\cdots\sigma_{n-3}\sigma_{n-2} \sigma_{n-3} \sigma_{n-2}=\alpha_{2}\sigma_{n-2}^ {-1} \sigma_{n-3} \sigma_{n-2} \quad\text{using \req{Artin2},}
\end{align*}
which in $Q$ implies that $\sigma_{n-2}=\sigma_{n-2}^ {-1} \sigma_{n-3} \sigma_{n-2}$, hence $\sigma_{n-3}=\sigma_{n-2}$. Now suppose that $\sigma_{n-2-i}=\sigma_{n-2}$  in $Q$ for some $1\leq i\leq n-4$. Since $(n-2)-(n-3-i)=1+i\geq 2$, by equations~\reqref{Artin1} and~\reqref{Artin2} and the induction hypothesis, we have:
\begin{align*}
\sigma_{n-2}&= \sigma_{n-3-i} \sigma_{n-2}\sigma_{n-3-i}^{-1}=\sigma_{n-3-i} \sigma_{n-2-i}\sigma_{n-3-i}^{-1}= \sigma_{n-2-i}^{-1}\sigma_{n-3-i} \sigma_{n-2-i}\\
&= \sigma_{n-2}^{-1}\sigma_{n-3-i} \sigma_{n-2}=\sigma_{n-3-i},
\end{align*}
from which it follows that $\sigma_{1}=\cdots=\sigma_{n-2}$ in $Q$ by induction. Since $n\geq 4$, in $Q$ we obtain similarly:
\begin{equation*}
\sigma_{n-1}= \sigma_{n-3}\sigma_{n-1}\sigma_{n-3}^{-1}=\sigma_{n-2}\sigma_{n-1}\sigma_{n-2}^{-1} =\sigma_{n-1}^{-1}\sigma_{n-2}\sigma_{n-1}= \sigma_{n-1}^{-1}\sigma_{n-3}\sigma_{n-1}=\sigma_{n-3},
\end{equation*}
which yields \req{sigmaQ}. Denoting the common element $\sigma_{i}$ in $Q$ by $\sigma$, we see that $Q$ is cyclic, generated by $\sigma$, and that $\sigma^{n-1}=1$ in $Q$ because $\alpha_{2}$ is trivial in $Q$. This implies that $Q$ is a quotient of $\Z_{n-1}$, and since $Q$ surjects onto $\Z_{n-1}$, it follows from above that $Q\cong \Z_{n-1}$ as required.

\item By~\cite[Theorem~4(i)]{GG1}, we have the following isomorphism:
\begin{equation}\label{eq:dirsum}
P_n(\St)\cong P_{n-3}(\St \setminus \brak{x_1,x_2,x_3})\oplus \Z_2,
\end{equation}
where the $\Z_{2}$-factor is generated by $\ft$. Using the presentation given in~\cite[Proposition~7]{GG3}, $P_{n-3}(\St \setminus \brak{x_1,x_2,x_3})$ admits a generating set $X=\setl{A_{i,j}}{4\leq j\leq n,\, 1\leq i<j}$ consisting of $(n-3)(n+2)/2$ elements, where $A_{i,j}$ is as defined in \req{defaij}. For each $j=4,\ldots,n$, the surface relation $\left(\prod_{i=1}^{j-1} A_{i,j}\right)\left(\prod_{k=j+1}^{n} A_{j,k}\right)=1$ allows us to delete the generator $A_{1,j}$ from $X$, yielding a generating set $X'=\setl{A_{i,j}}{4\leq j\leq n,\, 2\leq i<j}$ consisting of $n(n-3)/2$ elements, so
\begin{equation}\label{eq:pnst}
\operatorname{G}(P_{n}(\St))\leq (n(n-3)+2)/2
\end{equation}
by \req{dirsum}. Furthermore, the set $X'$ is just now subject to the remaining Artin braid relations (the those given at the top of \cite[page~385]{GG3}) of the presentation of $P_{n-3}(\St \setminus \brak{x_1,x_2,x_3})$, rewritten in terms of the elements of $X'$. These relations may be written as commutators of elements of $X'$, and so collapse under Abelianisation. Thus $(P_{n-3}(\St \setminus \brak{x_1,x_2,x_3}))\up{Ab}\cong \Z^{n(n-3)/2}$, for which a basis is given by the cosets of the elements of $X'$. We deduce from \req{dirsum} that $(P_n(\St))\up{Ab}\cong \Z^{n(n-3)/2}\oplus \Z_2$, and so $\operatorname{G}((P_{n}(\St))\up{Ab})=(n(n-3)+2)/2$. The result then follows from \repr{mingen} and \req{pnst}.\qedhere
\end{enumerate}
\end{proof}
 
\section{Generating sets of $B_{n}(\rp)$ and $P_{n}(\rp)$}\label{sec:rp2}

We now consider the case of the braid groups of the projective plane, and prove \reth{genrp}.

\begin{proof}[Proof of \reth{genrp}.]\mbox{}
\begin{enumerate}
\item As we mentioned just after \req{defab}, $a$ and $b$ are torsion elements. Since $B_{n}(\rp)$ is non cyclic for $n\geq 2$ (its Abelianisation is $\Z_{2}\oplus \Z_{2}$ by \repr{presvb}), we thus have $\operatorname{G}(B_{n}(\rp))\geq 2$. Let $L$ be the subgroup of $B_{n}(\rp)$ generated by $\brak{a,b}$. By \req{defab}, we have:
\begin{equation*}
ab^{-1}=(\rho_n\sigma_{n-1}\cdots\sigma_1) (\rho_{n-1}\sigma_{n-2}\cdots\sigma_1)^{-1}=\rho_n\sigma_{n-1}\rho_{n-1}^{-1}=
\sigma_{n-1}^{-1},
\end{equation*}
using relation~(\ref{it:rel3rp2}) of \repr{presvb}, so $\sigma_{n-1}\in L$. From~\cite[page~777]{GG2}, $a^{j}\sigma_{n-1}a^{-j}=\sigma_{n-j-1}$ for all $j=1,\ldots,n-2$, and so $\sigma_{i}\in L$ for all $i=1,\ldots,n-1$. Hence $\rho_{n}\in L$ by \req{defab}, and again by~\cite[page~777]{GG2}, it follows that 
\begin{equation}\label{eq:conja}
a^{j}\rho_{n}a^{-j}=\rho_{n-j}\quad\text{for all $j=1,\ldots,n-1$,}
\end{equation}
and so $\rho_{i}\in L$ for all $i=1,\ldots,n$. \repr{presvb} implies that $L=B_{n}(\rp)$, and so $\operatorname{TG}(B_{n}(\rp))=2$. Equation~\reqref{inequgeni} then forces $\operatorname{TG}(B_{n}(\rp))= \operatorname{G}(B_{n}(\rp))=2$.

\item Since $(B_{n}(\rp))\up{Ab}\cong \Z_2\oplus \Z_2$, we have $\operatorname{G}((B_{n}(\rp))\up{Ab})=2$. It follows from part~(\ref{it:genrpa}), \repr{mingen}  and \req{inequgen} that $\operatorname{NG}(B_{n}(\rp))=\operatorname{NG}(B_{n}(\rp))=2$, in particular $\normcl{x}\subsetneqq B_{n}(\rp)$ for all $x\in B_{n}(\rp)$.

\item Let $x\in \brak{a,b}$. By part~(\ref{it:normgenrp2}), $\normcl{x}\neq B_{n}(\rp)$. Since $B_{n}(\rp)=\ang{a,b}$ by part~(\ref{it:genrpa}), $B_{n}(\rp)/\normcl{x}$ is cyclic, so Abelian, and thus the projection $B_{n}(\rp)\to B_{n}(\rp)/\normcl{x}$ factors through $(B_{n}(\rp))\up{Ab}\cong \Z_2\oplus \Z_2$, from which it follows that $B_{n}(\rp)/\normcl{x}\cong \Z_{2}$.

\item Recall from~\cite[Corollary~19]{GG2} that the torsion elements of $P_n(\rp)$ are of order $2$ or $4$, and the only element of order $2$ is the full twist. By arguments similar to those of~\cite[Theorem~4]{GG8}, one may see that $P_{n}(\rp)$ is generated by 
\begin{equation}\label{eq:prespnrp}
\setangl{A_{i,j},\, \rho_{k}}{1\leq i<j\leq n,\, 1\leq k\leq n},
\end{equation}
where $A_{i,j}$ is given by \req{defaij} and $\rho_{k}$ is that of \repr{presvb} (beware however that the elements $\rho_{i}$ of \cite{GG8} differ somewhat from those of \repr{presvb}), and that $(P_{n}(\rp))\up{Ab}\cong \Z_{2}^n$, where under the Abelianisation homomorphism $\map{\xi}{P_{n}(\rp)}[\Z_{2}^n]$, $\rho_{k}$ is sent to the element $(\overline{0}, \ldots, \overline{0}, \underbrace{\overline{1}}_{\text{$k\up{th}$ position}}, \overline{0}, \ldots, \overline{0})$ of $\Z_{2}^n$, and for all $1\leq i<j\leq n$, $A_{i,j}$ is sent to the trivial element. \repr{mingen} thus implies that $\operatorname{NG}(P_{n}(\rp))\geq \operatorname{G}((P_{n}(\rp))\up{Ab})=n$. In order to obtain \req{genspnrp2}, it suffices by equations~\reqref{inequgeni} and~\reqref{inequgen} to show that $\operatorname{TG}(P_{n}(\rp))\leq n$. We achieve this by proving that the set $Y$ described by \req{torgenrp2} consists of torsion elements and that it generates $P_{n}(\rp)$. The first assertion follows immediately from the fact that $a$ and $b$ are of finite order (recall from \req{defalphabeta} that $a^{n}$ and $b^{n-1}$ are both of order $4$, hence all of the elements of $Y$ are of order $4$). It remains to show that $P_{n}(\rp)=\ang{Y}$. To do so, first observe that by the relation
\begin{equation*}
\rho_{j}^{-1}\rho_{i}^{-1}\rho_{j} \rho_{i}=A_{i,j}\quad\text{for all $1\leq i<j\leq n$}
\end{equation*}
given in~\cite[Lemma~17]{GG2}, it follows using \req{prespnrp} that $P_{n}(\rp)$ is generated by $\brak{\rho_{1},\ldots, \rho_{n}}$ (note that the elements denoted by $B_{i,j}$ in~\cite{GG2} are the elements $A_{i,j}$ of this paper). From \req{defalphabeta}, $AB^{-1}=\rho_{n}$, and applying \req{conja}, we see that for $j=0,1,\ldots,n-1$,
\begin{equation}\label{eq:rhoconjb}
a^{-j} (AB^{-1}) a^j=  A\ldotp a^{-j}B^{-1} a^j= \rho_{n-j}.
\end{equation}
Thus $\ang{A,\, a^{-j} B a^j,\, j=0,1,\ldots, n-1}= \ang{\rho_{1},\ldots, \rho_{n}}=P_{n}(\rp)$. But $A=\rho_{n}\cdots \rho_{1}$ by \req{defalphabeta}, and so
\begin{equation*}
a^{-(n-1)} B^{-1}a^{n-1}=A^{-1}\rho_{1}=A^{-1} (\rho_{2}^{-1}\cdots \rho_{n}^{-1})A=A^{-1} \left(\prod_{j=0}^{n-2} \left( a^{n-2-j} B a^{j+2-n} \ldotp A^{-1}\right)\right)\ldotp A
%
%
%
%
\end{equation*}
by \req{rhoconjb}. We can thus remove $a^{-(n-1)} B^{-1} a^{n-1}$ from the list of generators, and so $P_{n}(\rp)=\ang{Y}$, which proves the first part of the statement. Since the elements of $Y$ are of order $4$, we obtain the inequality $\operatorname{TG}(P_{n}(\rp))\leq n$ as required.\qedhere 
\end{enumerate}
\end{proof}

We finish this section by computing the number of conjugacy classes in $B_{n}(\rp)$ of elements of order $4$ lying in $P_{n}(\rp)$. Before doing so, we determine the centraliser of the elements $a$ and $b$ in $B_{n}(\rp)$. If $\Gamma$ is a group and $y\in \Gamma$, we denote the centraliser of $y$ in $\Gamma$ by $Z_{\Gamma}(y)$, and the centre of $\Gamma$ by $Z(\Gamma)$. A sketch of the following result appeared in~\cite[Remark~24]{GG10}. Note that line~5 of the final paragraph of that remark should read `are maximal finite cyclic subgroups', and not `are maximal finite subgroups'.

\begin{prop}\label{prop:centx}
Let $n\geq 2$, let $\widehat{x}\in \brak{a,b}$, and let $y\in \brak{x,\widehat{x}}$, where $x=a^{n}$ (resp.\ $y=b^{n-1}$) if $\widehat{x}=a$ (resp.\ $\widehat{x}=b$). Then $Z_{B_{n}(\rp)}(y)=\ang{\widehat{x}\mspace{1mu}}$.
\end{prop}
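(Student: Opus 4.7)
The inclusion $\ang{\widehat x}\subseteq Z_{B_{n}(\rp)}(y)$ is immediate, since $\ang{\widehat x}$ is an abelian subgroup containing $y$ (which is either $\widehat x$ itself or the power $x$ of $\widehat x$). Moreover, $x\in\ang{\widehat x}$ yields $Z_{B_{n}(\rp)}(\widehat x)\subseteq Z_{B_{n}(\rp)}(x)$, so the single inclusion $Z_{B_{n}(\rp)}(x)\subseteq\ang{\widehat x}$ suffices to prove both equalities $Z_{B_{n}(\rp)}(\widehat x)=Z_{B_{n}(\rp)}(x)=\ang{\widehat x}$. By the symmetry between the pairs $(a,A)$ and $(b,B)$, I plan to carry out the argument only in the case $\widehat x=a$, $x=A=a^{n}$; the case $\widehat x=b$, $x=B=b^{n-1}$ is entirely analogous.

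Let $g\in Z_{B_{n}(\rp)}(A)$ and set $H=\ang{a,g}$. The strategy is to prove that $H$ is a finite cyclic subgroup of $B_{n}(\rp)$; once this is established, the maximality of $\ang a$ among finite cyclic subgroups of order $4n$, which follows from the classification of finite subgroups of $B_{n}(\rp)$ in~\cite[Section~4]{GG2} together with~\cite[Proposition~26]{GG2}, forces $H=\ang a$ and hence $g\in\ang a$. Note that $A=a^{n}$ is automatically central in $H$, since both $a$ and $g$ commute with $A$. Once the finiteness of $H$ is known, to derive cyclicity it suffices to verify that no element of $H\setminus\ang a$ induces a non-trivial automorphism of $\ang a$ by conjugation; equivalently, since the only non-trivial automorphism of $\ang a\cong\Z_{4n}$ fixing $A$ would send $a$ to $a\cdot\ft$, it suffices to rule out this possibility. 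I would do so by combining the conjugation formula $a^{j}\rho_{n}a^{-j}=\rho_{n-j}$ of~\reqref{conja} with its analogue $a^{j}\sigma_{n-1}a^{-j}=\sigma_{n-j-1}$ from~\cite[page~777]{GG2}, together with the Abelianisation $\map{\xi}{P_{n}(\rp)}[\Z_{2}^{n}]$ from the proof of \reth{genrp}(d), on which $a$ acts by cyclic coordinate permutation with fixed subgroup generated by $\xi(A)$: the twisted automorphism $a\mapsto a\ft$ would be incompatible with this fixed-point calculation.

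The main obstacle is therefore the finiteness of $H$, equivalently that every element of $B_{n}(\rp)$ centralising the torsion element $A$ is itself of finite order. For this I would exploit the close relationship between the quotient $B_{n}(\rp)/\ang\ft$ and the mapping class group $\mcgrp[n]$ that is developed in \resec{mcg}: the image of $A$ in $\mcgrp[n]$ is a periodic mapping class of order $2$, whose centraliser can be shown to be finite via Nielsen realisation applied to the quotient orbifold of $\rp$ by $\ang A$; pulling back along the central extension by $\ang\ft$ would then give the required finiteness of $Z_{B_{n}(\rp)}(A)$. An alternative, more combinatorial, approach would be to analyse the fixed-point set of the action of $\ang a$ on $P_{n}(\rp)$ by conjugation directly, using the presentations in \repr{presvb} and~\cite[Theorem~4]{GG8} to bootstrap from the Abelian-level computation above to an equality $Z_{P_{n}(\rp)}(A)=\ang A$, from which the finiteness of $Z_{B_{n}(\rp)}(A)$ (and hence of $H$) would follow via the short exact sequence~\reqref{defperm}.
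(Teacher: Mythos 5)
Your reduction to the single inclusion $Z_{B_{n}(\rp)}(x)\subseteq\ang{\widehat{x}\mspace{1mu}}$ is sound, and maximality of $\ang{\widehat{x}\mspace{1mu}}$ among finite cyclic subgroups does follow from the torsion classification of $B_{n}(\rp)$. But the two steps that carry all the weight are not established. The first is finiteness: your ``combinatorial'' alternative is precisely the statement $Z_{P_{n}(\rp)}(A)=\ang{A}$, which is \cite[Corollary~4]{GG10} -- the external input the paper actually invokes (finiteness of $Z_{B_{n}(\rp)}(A)$ then follows from the sequence~\reqref{defperm}, as you say) -- so at that point you are restating the needed theorem rather than proving it. Your mapping-class-group route is not secure as sketched: centralisers of periodic mapping classes are in general infinite (they are commensurable with mapping class groups of quotient orbifolds), so finiteness of $Z_{\mcgrp}(\overline{A})$ is not a consequence of Nielsen realisation alone; it requires an orbifold analysis essentially equivalent in depth to the cited result, and only then does the central extension by $\ang{\ft}$ let you pull finiteness back.

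The second gap is the cyclicity of $H=\ang{a,g}$. You speak of the conjugation action of $H$ on $\ang{a}$, but $g\in Z_{B_{n}(\rp)}(A)$ gives no reason for $H$ to normalise $\ang{a}$ in the first place. Moreover the claim that the only non-trivial automorphism of $\ang{a}\cong\Z_{4n}$ fixing $A=a^{n}$ is $a\mapsto a\ft$ is false in general: such automorphisms are $a\mapsto a^{k}$ with $k\equiv 1 \pmod 4$ and $\gcd(k,4n)=1$; for $n=3$ this allows $a\mapsto a^{5}$, which is not $a\mapsto a\ft=a^{7}$ (and for odd $n$ the map $a\mapsto a\ft$ does not even fix $A$). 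Finally, even a trivial action only gives $\ang{a}\subseteq Z(H)$, and a finite group with a large central cyclic subgroup need not be cyclic in the abstract; to conclude you would still need the classification of the finite subgroups of $B_{n}(\rp)$ as cyclic, dicyclic or binary polyhedral (\cite[Theorem~5]{GG12}). This is exactly the ingredient the paper uses, in a cleaner way: $y$ lies in the centre of the finite group $Z_{B_{n}(\rp)}(y)$, so that centre has order at least $4$, whereas a dicyclic or binary polyhedral group has centre of order $2$; combined with maximality of $\ang{\widehat{x}\mspace{1mu}}$ as a finite cyclic subgroup this forces $Z_{B_{n}(\rp)}(y)=\ang{\widehat{x}\mspace{1mu}}$. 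So your outline, once repaired, would converge on the same two external inputs as the paper rather than providing an independent argument.
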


\begin{proof}
Let $\widehat{x},x$ and $y$ be as in the statement. By~\cite[Corollary~4]{GG10}, $Z_{P_{n}(\rp)}(x)=\ang{x}$ is finite (the result also holds if $n=2$ since $P_{2}(\rp)\cong \quat$). It follows from the short exact sequence~\reqref{defperm} that $Z_{B_{n}(\rp)}(x)$ is finite, and the inclusion $Z_{B_{n}(\rp)}(\widehat{x}) \subset Z_{B_{n}(\rp)}(x)$ then implies that $Z_{B_{n}(\rp)}(y)$ is finite. Furthermore, $Z(Z_{B_{n}(\rp)}(y))\supset \ang{y}$, so the order of $Z(Z_{B_{n}(\rp)}(y))$ is at least $4$. On the other hand, $Z_{B_{n}(\rp)}(y)$ clearly contains the cyclic group $\ang{\widehat{x}\mspace{1mu}}$ of order $4(n-k)$, where $k=0$ (resp.\ $k=1$) if $\widehat{x}=a$ (resp.\ $\widehat{x}=b$), which using~\cite[Theorem~5]{GG12} and the subgroup structure of dicyclic and the binary polyhedral groups (for the latter, see~\cite[Proposition~85]{GG13} for example) is maximal as a finite cyclic group. Suppose that $Z_{B_{n}(\rp)}(y)\neq \ang{\widehat{x}\mspace{1mu}}$. This subgroup structure implies that $Z_{B_{n}(\rp)}(y)$ would then be either dicyclic or binary polyhedral, and so its centre $Z(Z_{B_{n}(\rp)}(y))$ would be isomorphic to $\Z_{2}$. To see this, note that if $G$ is a dicyclic (resp.\ binary polyhedral) group then it possesses a unique element $g$ of order $2$ that is central. If $G\cong \quat$ then clearly $Z(G)\cong \Z_{2}$. So assume that $G\ncong \quat$. Then $G/\ang{g}$ is dihedral of order at least $6$ (resp.\ isomorphic to one of $\an[4]$, $\sn[4]$ and $\an[5]$), and so its centre is trivial. It thus follows that $Z(G)=\ang{g}$, and so $Z(G)\cong \Z_{2}$. But $y\in Z(Z_{B_{n}(\rp)}(y))$ by construction, and taking $G=Z_{B_{n}(\rp)}(y)$ yields a contradiction since $y$ is of order at least $4$. We thus conclude that $Z_{B_{n}(\rp)}(\widehat{x})= \ang{\widehat{x}\mspace{1mu}}$ as required.
\end{proof}

\begin{rem}
In the proof of \repr{centx}, we use~\cite[Corollary~4]{GG10}, whose proof depends (via~\cite[Proposition~21]{GG10} and~\cite[Proposition~17]{GG10} in the case $n=3$) upon the characterisation of the conjugacy classes of elements of order $4$ given in~\cite[Proposition~21(3)]{GG2}. During the discussion leading to \reth{genrp}(\ref{it:genrpd}), we realised that the statement of the latter result is not correct, and that there are in fact five conjugacy classes of elements of order $4$ in $B_{3}(\rp)$ (which give rise to four conjugacy classes of subgroups isomorphic to $\Z_{4}$), and not three. This does not however affect the validity of \cite[Corollary~4]{GG10}. An erratum for~\cite[Proposition~21(3)]{GG2} will appear elsewhere.
\end{rem}

We then have the following result, which answers the question posed in~\cite[Remark~22]{GG10}.

\begin{prop}\label{prop:ccpnrp2}
If $n\geq 2$ then the number of conjugacy classes in $B_{n}(\rp)$ of pure braids of order $4$ is equal to $(n-2)!(2n-1)$.
\end{prop}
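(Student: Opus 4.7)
Given the discussion preceding the statement (in particular the naïve upper bound $2n!$ mentioned earlier), the quantity of interest is the number of $P_{n}(\rp)$-conjugacy classes of order-$4$ elements, and the plan is to reduce its computation to a double-coset count in $\sn$.

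First, recall that~\cite[Proposition~10]{GG10} implies every order-$4$ element of $P_{n}(\rp)$ is $B_{n}(\rp)$-conjugate to $A$ or to $B$. These two elements are \emph{not} $B_{n}(\rp)$-conjugate: by \repr{centx} their centralisers in $B_{n}(\rp)$ are the cyclic groups $\ang{a}$ and $\ang{b}$, of respective orders $4n$ and $4(n-1)$, which are distinct for all $n\geq 2$. Using that $P_{n}(\rp)\triangleleft B_{n}(\rp)$ together with the standard description of $H$-orbits on a $G$-conjugacy class, the number of $P_{n}(\rp)$-classes inside the $B_{n}(\rp)$-class of $x$ equals the number of double cosets $\ord{P_{n}(\rp)\backslash B_{n}(\rp)/Z_{B_{n}(\rp)}(x)}$. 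So the total is
\[
N \;=\; \ord{P_{n}(\rp)\backslash B_{n}(\rp)/\ang{a}} + \ord{P_{n}(\rp)\backslash B_{n}(\rp)/\ang{b}}.
\]

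To evaluate these, the permutation homomorphism $\pi\colon B_{n}(\rp)\to \sn$ of~\req{defperm} identifies $P_{n}(\rp)\backslash B_{n}(\rp)$ with $\sn$, turning the right action of $\ang{a}$ (resp.\ $\ang{b}$) into right multiplication by the subgroup $\ang{\pi(a)}$ (resp.\ $\ang{\pi(b)}$) of $\sn$. Since $\rho_{i}$ lies in $P_{n}(\rp)$ and $\pi(\sigma_{i})=(i,i+1)$, a direct expansion of $a$ and $b$ gives
\[
\pi(a) = (n-1,n)(n-2,n-1)\cdots(1,2) = (1,n,n-1,\ldots,2),
\]
an $n$-cycle, and similarly $\pi(b)=(1,n-1,n-2,\ldots,2)$ is an $(n-1)$-cycle fixing the point $n$. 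Hence $\ang{\pi(a)}$ and $\ang{\pi(b)}$ have orders $n$ and $n-1$ respectively.

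Because right multiplication of a group on itself is free, $\ang{\pi(a)}$ and $\ang{\pi(b)}$ act freely on $\sn$, contributing $n!/n=(n-1)!$ and $n!/(n-1)=n(n-2)!$ orbits respectively. Summing,
\[
N \;=\; (n-1)! + n(n-2)! \;=\; (n-1)(n-2)! + n(n-2)! \;=\; (2n-1)(n-2)!,
\]
as asserted. The only step that is not completely routine is the explicit identification of $\pi(a)$ and $\pi(b)$ as $n$- and $(n-1)$-cycles; everything else is the double-coset formula together with elementary arithmetic in $\sn$.
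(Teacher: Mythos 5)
Your argument is correct and follows essentially the same route as the paper: both proofs reduce the count, via \cite[Proposition~10]{GG10} and the centraliser computation of \repr{centx}, to counting cosets of $\ang{\pi(a)}$ and $\ang{\pi(b)}$ in $\sn$, yielding $n!/n+n!/(n-1)=(2n-1)(n-2)!$, and your double-coset formulation is just a compact packaging of the chain of equivalences the paper writes out explicitly. The only cosmetic differences are that you separate the $A$-classes from the $B$-classes by comparing the orders of the centralisers $\ang{a}$ and $\ang{b}$ where the paper uses the images of $A$ and $B$ in the Abelianisation $\Z_{2}\oplus\Z_{2}$, and that the paper additionally invokes \cite[Proposition~15]{GG12} to rule out the a priori possibility that an order-$4$ pure braid is only conjugate to $A^{-1}$ or $B^{-1}$, a point your quotation of the reduction step takes for granted.
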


\begin{proof}
Let $\brak{\gamma_{1},\ldots,\gamma_{n!}}$ be a set of coset representatives of $P_{n}(\rp)$ in $B_{n}(\rp)$, let $w\in P_{n}(\rp)$ be of order $4$, and let $[w]$ denote the conjugacy class of $w$ in $P_{n}(\rp)$. By~\cite[Proposition 10]{GG10} and the short exact sequence~\reqref{defperm}, there exist $1\leq i\leq n!$, $\epsilon\in\brak{1,-1}$, $z\in P_{n}(\rp)$ and $x\in \brak{a^{n}, b^{n-1}}$ such that $w=z\gamma_{i} x^{\epsilon} \gamma_{i}^{-1}z^{-1}$. Now~\cite[Proposition~15]{GG12} implies that $\widehat{x}$ is conjugate to $\widehat{x}^{-1}$ in $B_{n}(\rp)$, where $\widehat{x}=a$ (resp.\ $\widehat{x}=b$) if $x=a^{n}$ (resp.\ $x=b^{n-1}$). Hence $x$ and $x^{-1}$ are conjugate in $B_{n}(\rp)$, and by picking a different coset representative $\gamma_{i}$ if necessary, we may suppose that $\epsilon=1$, whence $[w]=[\gamma_{i} x \gamma_{i}^{-1}]$. Conversely each set $[\gamma_{i} x \gamma_{i}^{-1}]$ represents a conjugacy class of elements of order $4$ in $P_{n}(\rp)$. We must thus count the number of such conjugacy classes in $P_{n}(\rp)$. First note that since $a^{n}$ and $b^{n-1}$ are sent to distinct elements of $(B_{n}(\rp))\up{Ab}\cong \Z_{2}\oplus \Z_{2}$ under Abelianisation, $[\gamma_{i} a^{n} \gamma_{i}^{-1}]\neq [\gamma_{j} b^{n-1} \gamma_{j}^{-1}]$ for all $1\leq i,j\leq n$. It thus suffices to enumerate the number of distinct conjugacy classes $[\gamma_{i} x \gamma_{i}^{-1}]$ for each $x\in \brak{a^{n}, b^{n-1}}$. Set $k=0$ (resp.\ $k=1$) if $x=a^{n}$ (resp.\ $x=b^{n-1}$). If $1\leq i,j\leq n!$, we have:
\begin{align*}
[\gamma_{i} x \gamma_{i}^{-1}]\!=\![\gamma_{j} x \gamma_{j}^{-1}] & \Longleftrightarrow \text{$\exists h\in P_{n}(\rp)$ such that $h\gamma_{i} x \gamma_{i}^{-1}h^{-1}=\gamma_{j} x \gamma_{j}^{-1}$}\notag\\
& \Longleftrightarrow \text{$\exists h\in P_{n}(\rp)$ such that $\gamma_{j}^{-1}h\gamma_{i}$ commutes with $x$}\notag\\
& \Longleftrightarrow \text{$\exists h\in P_{n}(\rp), \, \exists l\!\in\! \brak{0,1,\ldots,4(n-k)-1}$ such that $\gamma_{j}^{-1}h\gamma_{i}\!=\!\widehat{x\,}^{l}$}\\
& \Longleftrightarrow \text{$\exists h'\in P_{n}(\rp), \, \exists l\!\in\! \brak{0,1,\ldots,4(n-k)-1}$ such that $h'\gamma_{j}^{-1}\gamma_{i}\!=\!\widehat{x\,}^{l}$}\\
& \Longleftrightarrow \text{$\exists l\in \brak{0,1,\ldots,4(n-k)-1}$ such that $\widehat{x\,}^{l}\gamma_{i}^{-1}\gamma_{j}\in P_{n}(\rp)$}\\
& \Longleftrightarrow \text{$\exists l\in \brak{0,1,\ldots,4(n-k)-1}$ such that $\pi(\gamma_{i})=\pi(\gamma_{j})(\pi(\widehat{x}))^{l}$}\\
& \Longleftrightarrow \text{$\pi(\gamma_{i})\equiv\pi(\gamma_{j})$ modulo $\ang{\pi(\widehat{x})}$,}
\end{align*}
where we have applied \repr{centx} to prove the equivalence of the second and third lines. This implies that the number of distinct conjugacy classes of the form $[\gamma_{i}x\gamma_{i}^{-1}]$ is equal to $[\sn:\ang{\pi(\widehat{x})}]$. But $\pi(\widehat{x})$ is an $(n-k)$-cycle, and so there are $n!/(n-k)$ such conjugacy classes. Taking the sum over $x\in\brak{a,b}$, it follows that the number of conjugacy classes in $B_{n}(\rp)$ of pure braids of order $4$ is given by:
\begin{equation*}
\frac{n!}{n}+\frac{n!}{n-1}=\frac{(2n-1)n!}{n(n-1)}=(n-2)!(2n-1)
\end{equation*}
as required.
\end{proof}

\section{The mapping class groups of $M=\dt, \St, \rp$}\label{sec:mcg}

Let $M$ be a compact, connected surface, perhaps with boundary, and let $X$ be an $n$-point subset lying in the interior of $M$. If $M$ is orientable (resp.\ non-orientable), we recall that the \emph{$n\up{th}$ mapping class group} of $M$, denoted by $\mcg$, is defined to be the set of isotopy classes of orientation-preserving homeomorphisms (resp.\ homeomorphisms) of $M$ that leave $X$ invariant and are the identity on the boundary, and where the isotopies leave both $X$ and the boundary fixed pointwise. Note that up to isomorphism, $\mcg$ does not depend on the choice of marked points $X$. The \emph{$n\up{th}$ pure mapping class group} of $M$, denoted by $\pmcg$, is the normal subgroup of $\mcg$ given by imposing the condition that the homeomorphisms fix the set $X$ pointwise. This gives rise to a short exact sequence similar to that of~\reqref{defperm}:
\begin{equation}\label{eq:defpermmcg}
1 \to \pmcg \to\mcg \stackrel{\widehat{\pi}}{\to} \sn \to 1.
\end{equation}
In the case where $M$ is the $2$-disc $\dt$, it is well known that $\mcgd\cong B_{n}$~\cite{Bi}. If we relax the hypothesis in the definition of $\mcgd$ that the isotopies fix the boundary then we obtain a group that we denote by $\mcgdb$, which is isomorphic to $B_{n}/\ang{\ft}$ (if $n\geq 3$ then $Z(B_{n})=\ang{\ft}$). We let $\pmcgdb$ denote the associated pure mapping class group.

Let $n\geq 2$. A first application of our results to $\mcgdb$ as well as to $\mcg$, $M$ being $\St$ or $\rp$, is obtained using the fact that the mapping class group in question is the quotient of the corresponding braid group by $\ang{\ft}$~\cite{Bi2,Ma,MKS,Sc}. Further, we have the following commutative diagram of short exact sequences:
\begin{equation*}
\xymatrix{%
& 1\ar[d] & 1\ar[d] & &\\
& \ang{\ft}\ar[d]\ar@{=}[r] &\ang{\ft} \ar[d] & &\\
1 \ar[r]  & P_{n}(M) \ar[r] \ar[d] & B_{n}(M) \ar[d] \ar[r]^{\pi} & \sn
\ar@{=}[d] \ar[r] & 1\\
1 \ar[r] & \pmcg \ar[d]\ar[r] & \mcg \ar[d]\ar[r]^(.65){\widehat{\pi}} & \sn \ar[r] & 1,\\
& 1 & 1 & &}
\end{equation*}
where the two vertical short exact sequences are obtained by taking the quotient of the (pure) braid groups of $M$ by $\ang{\ft}$. A similar diagram may be constructed for $B_{n}$ and $\mcgdb$.

Let us start by considering the case of the mapping class groups $\mcgdb$ and $\pmcgdb$ of the disc. 
\begin{prop}\label{prop:mingenartinmapp}
Let $n\geq 2$.
\begin{enumerate}[(a)]
\item We have:
\begin{equation*}
\operatorname{G}(\mcgdb)=\operatorname{TG}(\mcgdb)=
\begin{cases}
1 & \text{if $n=2$}\\
2 & \text{if $n\geq 3$.}
\end{cases}
\end{equation*}
Furthermore, $\operatorname{NG}(\mcgdb[2])=\operatorname{NTG}(\mcgdb[2])=1$, and
\begin{equation*}
\operatorname{NG}(\mcgdb)=1\quad\text{and}\quad\operatorname{NTG}(\mcgdb)=2\quad \text{for all $n\geq 3$}.
\end{equation*}

\item $\operatorname{NG}(\pmcgdb)= \operatorname{G}(\pmcgdb)= \operatorname{G}(\pmcgdb\up{Ab})=n(n-1)/2-1$.
\end{enumerate}
\end{prop}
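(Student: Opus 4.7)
The plan is to reduce both parts to known facts about $B_n$ and $P_n$ by exploiting the quotient maps $B_n \to \mcgdb \cong B_n/\ang{\ft}$ and $P_n \to \pmcgdb \cong P_n/\ang{\ft}$. I would first dispose of the case $n = 2$: here $B_2 \cong \Z$ with $\ft = \sigma_1^2$, so $\mcgdb[2] \cong \Z_2$ and $\pmcgdb[2]$ is trivial, and every numerical claim is immediate (noting that $n(n-1)/2 - 1 = 0$ for $n = 2$, consistent with the triviality of $\pmcgdb[2]$).

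For part (a) with $n \geq 3$, applying \repr{mingen} to the surjection $B_n \to \mcgdb$ together with \repr{mingenartin}(a) yields $\operatorname{G}(\mcgdb) \leq 2$ and $\operatorname{NG}(\mcgdb) \leq 1$; non-cyclicity of $\mcgdb$ (forcing $\operatorname{G}(\mcgdb) \geq 2$) follows from the surjection $\widehat{\pi}\colon \mcgdb \to \sn$ onto the non-cyclic symmetric group. For $\operatorname{TG}(\mcgdb) = 2$, the images of the Murasugi elements $\alpha_0,\alpha_1$ are of finite order $n$ and $n-1$ in $\mcgdb$ by \req{rootsft}, and the calculation $\alpha_0^{-1}\alpha_1 = \sigma_{n-1}$ combined with conjugation by powers of $\alpha_0$ (which cyclically shifts the standard generators) shows $\ang{\alpha_0,\alpha_1} = B_n$, so their images generate $\mcgdb$.

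The crux of part (a) is the equality $\operatorname{NTG}(\mcgdb) = 2$ for $n \geq 3$; the upper bound comes from $\operatorname{TG} = 2$ via \req{inequgen}, so it remains to rule out $\operatorname{NTG}(\mcgdb) = 1$. The key input is the classical K\'er\'ekj\'art\'o--Eilenberg type classification, which asserts that every torsion element of $B_n/\ang{\ft}$ is conjugate to a power of the image of $\alpha_0$ or of $\alpha_1$. The Abelianisation of $\mcgdb$ is $\Z_{n(n-1)}$, inherited from the Abelianisation $B_n \to \Z$ under which $\ft \mapsto n(n-1)$, and the classes of $\alpha_0^k$ and $\alpha_1^k$ are sent to the residues $k(n-1)$ and $kn$ respectively. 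A direct $\gcd$ computation using that $n$ and $n-1$ are coprime shows that neither residue is a unit in $\Z_{n(n-1)}$ when $n \geq 3$; by \repr{mingen} no such torsion class can normally generate $\mcgdb$. I expect this step to be the main obstacle, as it rests on the conjugacy classification of periodic braids in $B_n/\ang{\ft}$ rather than the analogous statement in $B_n(\St)$.

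For part (b), I would start from the presentation of $P_n$ used in \repr{mingenartin}(b) and pass to its quotient by $\ft$. Under the Abelianisation $\phi_P\colon P_n \to P_n\up{Ab} \cong \Z^{n(n-1)/2}$, with basis the classes of the $A_{i,j}$ of \req{defaij}, the standard expansion of $\ft$ as an ordered product of all the $A_{i,j}$ gives $\phi_P(\ft) = \sum_{1 \leq i < j \leq n} A_{i,j}$, a primitive vector; hence $(\pmcgdb)\up{Ab} \cong \Z^{n(n-1)/2 - 1}$, and \repr{mingen} supplies the lower bound $\operatorname{NG}(\pmcgdb) \geq \operatorname{G}((\pmcgdb)\up{Ab}) = n(n-1)/2 - 1$. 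For the matching upper bound I would invoke the classical ordered-product identity expressing $\ft$ as an explicit non-commutative word in the $A_{i,j}$; imposing $\ft = 1$ in $\pmcgdb$ then lets one specific generator (for instance $A_{n-1,n}$, appearing as the final letter of that word) be solved in terms of the remaining $n(n-1)/2 - 1$ generators, yielding a generating set of the required size and closing the argument via \req{inequgeni}.
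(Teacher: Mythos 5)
Your proposal is correct and follows essentially the same route as the paper's proof: the same reduction through the surjections $B_n\to\mcgdb$ and $P_n\to\pmcgdb$ with \repr{mingen}, the same key input (the Eilenberg--von~Ker\'ekj\'art\'o/Gonz\'alez-Meneses--Wiest classification of torsion elements of $B_n/\ang{\ft}$ as conjugates of powers of $\overline{\alpha_0},\overline{\alpha_1}$) combined with the Abelianisation $\Z_{n(n-1)}$ to exclude $\operatorname{NTG}(\mcgdb)=1$, and the same deletion of one generator $A_{i,j}$ using the product decomposition of $\ft$ for part~(b). The only cosmetic difference is that in part~(b) you obtain $(\pmcgdb)\up{Ab}$ directly as $P_n\up{Ab}$ modulo the primitive image of $\ft$, whereas the paper reaches the same conclusion via Stallings' exact sequence for the central extension $1\to\ang{\ft}\to P_n\to\pmcgdb\to 1$.
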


\begin{rem}
Let $n\geq 2$. Then the group $\pmcgdb\cong P_{n}/\ang{\ft}$ is torsion free. To see this, first note that if $n=2$, we have $P_{2}=\ang{\ft[2]}=\ang{\sigma_{1}^{2}}$, and the result is clear. So suppose that $n\geq 3$. Recall from~\cite[Theorem~4(ii)]{GG1} that the projection $P_{n}\to P_{2}$ given geometrically by forgetting all but the first two strings gives rise to the isomorphism $P_{n}\cong \Z\oplus P_{n-2}(\dt\setminus \brak{x_1, x_2})$, where the $\Z$-factor is generated by $\ft$. Thus $P_{n}/\ang{ft}\cong P_{n-2}(\dt\setminus \brak{x_1, x_2})$, which is well known to be torsion free.
\end{rem}

\begin{proof}[Proof of \repr{mingenartinmapp}.]
If If $n=2$ then the statements follow easily because $\mcgdb[2]\cong B_{2}/\ang{\ft[2]}=\ang{\sigma_{1}}/\ang{\sigma_{1}^{2}}\cong \Z_{2}$ and $\pmcgdb[2]$ is trivial. So from now on, we suppose that $n\geq 3$.
\begin{enumerate}
\item The equality $\operatorname{G}(\mcgdb)=2$ follows from Propositions~\ref{prop:mingenartin}(\ref{it:gensbn}) and~\ref{prop:mingen}, as well as the surjectivity of the homomorphisms $B_{n}\to \mcgdb$ and $\map{\widehat{\pi}}{\mcgdb}[\sn]$, the second implying that $\mcgdb$ is non cyclic.

To see that $\operatorname{TG}(\mcgdb)=2$, the proof of \repr{mingenartin}(\ref{it:gensbn}) implies that $B_{n}=\ang{\sigma_{n-1}, \alpha_{0}}$, where $\alpha_0=\sigma_1\cdots\sigma_{n-1}$, but $\sigma_{n-1}=\alpha_{0}^{-1}\alpha_{1}$, where $\alpha_1=\sigma_1\cdots\sigma_{n-1}^{2}$, so $B_{n}=\ang{\alpha_{0},\alpha_{1}}$, and thus $\mcgdb=\ang{\overline{\alpha_{0}}, \overline{\alpha_{1}}}$, where $\overline{\alpha_{0}}$ and $\overline{\alpha_{1}}$ denote the $\ang{\ft}$-cosets of $\alpha_{0}$ and $\alpha_{1}$ respectively. But $\overline{\alpha_{0}}$ and $\overline{\alpha_{1}}$ are of finite order in $\mcgdb$, of order $n$ and $n-1$ respectively (see~\cite{E,K} or~\cite[Lemma~3.1]{GW}), and so individually do not generate $\mcgdb$. The result then follows by applying the inequality~\reqref{inequgeni}.

The equality $\operatorname{NG}(\mcgdb)=1$ is a straightforward consequence of the second part of \repr{mingenartin}(\ref{it:gensbn}), that $B_{n}=\normcl{\sigma_{1}}$, and \repr{mingen}. It remains to show that $\operatorname{NTG}(\mcgdb)=2$. To do so, let $\alpha\in \mcgdb$ be an element of finite order, and let $\map{\phi}{\mcgdb}[(\mcgdb)\up{Ab}]$ denote Abelianisation. Identifying $\mcgdb$ with $B_{n}\left/\ang{\ft}\right.$, and using the fact that
\begin{equation}\label{eq:ft}
\ft=(\sigma_{1}\cdots \sigma_{n-1})^{n},
\end{equation}
as well as the standard presentation~\reqref{Artin1} and~\reqref{Artin2} of $B_{n}$, we see that $(\mcgdb)\up{Ab}\cong \Z_{n(n-1)}$ is generated by the element $\phi(\sigma_{j})$ for all $j=1,\ldots,n-1$. Applying~\cite[Lemma~3.1]{GW}, it follows that there exists $i\in \brak{0,1}$ such that $\alpha\in \normcl{\overline{\alpha_{i}}}$, so $\normcl{\alpha}\subset \normcl{\overline{\alpha_{i}}}$, and hence $\ang{\phi(\alpha)}\subset \ang{\phi(\overline{\alpha_{i}})}$. But $\phi(\overline{\alpha_{i}})=\overline{n-1+i}$ in $\Z_{n(n-1)}$, so $\ang{\phi(\alpha)}\subset \ang{\phi(\overline{\alpha_{i}})}\subsetneqq (\mcgdb)\up{Ab}$. We conclude that $\normcl{\alpha}\subsetneqq \mcgdb$, and $\operatorname{NTG}(\mcgdb)>1$. The result then follows from \req{inequgen} and the previous paragraph.

\item From the proof of \repr{mingenartin}(\ref{it:gensbnb}), $H_{1}(P_n)=P_n\up{Ab}$ is a free Abelian group of rank $n(n-1)/2$ with a basis comprised of the classes of the elements $A_{i,j}$ given by \req{defaij}, where $1\leq i<j\leq n$. Recall that if $1\to K\to G\to Q\to 1$ is an extension of groups then we have a $6$-term exact sequence 
\begin{equation*}
H_2(G)\to H_2(Q)\to K/[G,K]\to H_1(G)\to H_1(Q)\to 1
\end{equation*}
due to Stallings~\cite{Br,St}. Thus the central short exact sequence  $1 \to \ang{\ft}  \to  P_n \to  \pmcgdb \to 1$ gives rise to the following exact sequence: 
\begin{equation*}
H_{2}(P_{n})\to H_2(\pmcgdb) \to \ang{\ft} \stackrel{\psi}{\to} H_1(P_n)  \to H_1(\pmcgdb) \to 1,
\end{equation*}
where $\map{\psi}{\ang{\ft}}[H_1(P_n)]$ is the homomorphism induced by the inclusion $\ang{\ft}\to  P_n$. Now 
\begin{equation}\label{eq:ftdecomp}
 \ft=\prod_{i=1}^{n-1} (A_{i,i+1}\cdots A_{i,n}),
\end{equation}
and using the description of $P_n\up{Ab}$, we see that $\im{\psi}$ is generated by a primitive element of $P_n\up{Ab}$. The isomorphism $H_{1}(P_{n})/\im{\psi}\cong H_1(\pmcgdb)$ then implies that $\operatorname{G}(\pmcgdb\up{Ab})=n(n-1)/2-1$, and thus 
\begin{equation}\label{eq:pmcgdbineq}
\operatorname{G}(\pmcgdb)\geq \operatorname{NG}(\pmcgdb) \geq n(n-1)/2-1
\end{equation}
by \repr{mingen}. Using \req{ftdecomp} once more, the quotient $\pmcgdb\cong P_{n}/\ang{\ft}$ admits a generating set consisting of all but one of the $\ang{\ft}$-cosets of the $A_{i,j}$, and thus $(n-1)/2-1\geq \operatorname{G}(\pmcgdb)$. The result then follows from \req{pmcgdbineq}.\qedhere     
\end{enumerate}
\end{proof}

As we mentioned above, if $n\geq 2$, $\mcgst$ (resp.\ $\pmcgst$) is isomorphic to $B_n(\St)/\ang{\ft}$ (resp.\ $P_n(\St)/\ang{\ft}$). We obtain a result similar to that of parts~(\ref{it:bemaplusb}) and~(\ref{it:bemaplusd}) of \reth{bemaplus}.


\begin{prop}\mbox{}\label{prop:normclst}
\begin{enumerate}
\item\label{it:normclstodd} If $n\geq 3$ is odd then $\normcl{\overline{\alpha_{1}}}=\mcgst$. In particular, $\mcgst$ is strongly $(n-1)$-torsion generated.
\item\label{it:normclsta} If $n\geq 4$ is even, there is no torsion element in $\mcgst$ whose normal closure is equal to $\mcgst$. Further, the quotient $\mcgst\left/\normcl{\overline{\alpha_1}}\right.$ is isomorphic to $\Z_2$.
\item For all $n\geq 3$ and for $i=0,2$, the quotient $\mcgst\left/\normcl{\overline{\alpha_i}}\right.$ is isomorphic to $\Z_{n-1}$, unless $n=3$ and $i=2$, in which case $\mcgst[3]/\normcl{\overline{\alpha_{2}}} \cong \mcgst[3]\cong \sn[3]$.
\end{enumerate}
\end{prop}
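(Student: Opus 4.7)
My plan rests on the identification $\mcgst\cong B_{n}(\St)/\ang{\ft}$ together with the relations $\alpha_{0}^{n}=\alpha_{1}^{n-1}=\alpha_{2}^{n-2}=\ft$ of~\req{rootsft}. These force $\ft\in\normcl{\alpha_{i}}$ in $B_{n}(\St)$ for each $i\in\brak{0,1,2}$, so the projection $B_{n}(\St)\to\mcgst$ induces the basic isomorphism
\begin{equation*}
\mcgst/\normcl{\overline{\alpha_{i}}}\cong B_{n}(\St)/\normcl{\alpha_{i}}.
\end{equation*}
Further, since $\alpha_{i}$ has order $2(n-i)$ in $B_{n}(\St)$ with $\alpha_{i}^{n-i}=\ft$, the element $\overline{\alpha_{i}}$ has order exactly $n-i$ in $\mcgst$.

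Via this dictionary, parts~(\ref{it:normclstodd}) and~(c), as well as the second assertion of~(\ref{it:normclsta}), reduce to earlier results. For~(\ref{it:normclstodd}), \repr{bema} yields $\normcl{\alpha_{1}}=B_{n}(\St)$ for odd $n$, hence $\normcl{\overline{\alpha_{1}}}=\mcgst$, and the order computation gives strong $(n-1)$-torsion generation. For the second part of~(\ref{it:normclsta}), \repr{bema} gives $B_{n}(\St)/\normcl{\alpha_{1}}\cong\Z_{2}$ when $n$ is even, which transfers to $\mcgst$ via the basic isomorphism. For~(c) with $i\in\brak{0,2}$, excluding the case $n=3,\,i=2$, \reth{bemaplus}(\ref{it:bemaplusd}) combined with the basic isomorphism yields $\mcgst/\normcl{\overline{\alpha_{i}}}\cong\Z_{n-1}$. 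The exceptional case I would handle by observing that $\alpha_{2}=\sigma_{1}^{2}$ is the non-trivial element of $P_{3}(\St)=\ang{\ft[3]}\cong\Z_{2}$, so $\normcl{\alpha_{2}}=\ang{\ft[3]}$ in $B_{3}(\St)$, whence the basic isomorphism gives $\mcgst[3]/\normcl{\overline{\alpha_{2}}}\cong \mcgst[3]$; the isomorphism $\mcgst[3]\cong\sn[3]$ then follows from~\reqref{defpermmcg} since $\pmcgst[3]=P_{3}(\St)/\ang{\ft[3]}$ is trivial.

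The substantive content is the first assertion of~(\ref{it:normclsta}). First I would verify that torsion lifts: if $\overline{x}^{k}=1$ in $\mcgst$ then $x^{k}\in\ang{\ft}$, and since $\ft$ has order~$2$, $x^{2k}=1$ in $B_{n}(\St)$. By \reth{murasugi}, every torsion element of $\mcgst$ is therefore conjugate to a power of some $\overline{\alpha_{i}}$ with $i\in\brak{0,1,2}$. I would then compute $(\mcgst)\up{Ab}$ by quotienting $(B_{n}(\St))\up{Ab}\cong\Z_{2(n-1)}$ by the image of $\ft$, which equals $n(n-1)$ and vanishes modulo $2(n-1)$ when $n$ is even, so $(\mcgst)\up{Ab}\cong\Z_{2(n-1)}$ for such $n$. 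Using the standard Abelianisation formulas, the images of $\overline{\alpha_{0}},\overline{\alpha_{1}},\overline{\alpha_{2}}$ in $\Z_{2(n-1)}$ are $n-1,\,n,\,n-1$ respectively; the first and third generate the unique subgroup of order~$2$, and since $\gcd(n,2(n-1))=2$ for even $n$, the second generates a subgroup of index~$2$. Since the image in $(\mcgst)\up{Ab}$ of the normal closure $\normcl{\overline{x}}$ is $\ang{\text{image of }\overline{x}}$, which lies inside one of these three proper subgroups, $\normcl{\overline{x}}$ is itself a proper subgroup of $\mcgst$.

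The only delicate point is the torsion classification combined with the Abelianisation bookkeeping for even $n$; the remaining parts are a routine transfer of earlier results through the central extension $1\to\ang{\ft}\to B_{n}(\St)\to\mcgst\to 1$.
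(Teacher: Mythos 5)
Your proposal is correct, and its skeleton is the paper's: the relation $\alpha_i^{n-i}=\ft$ gives $\ft\in\normcl{\alpha_i}$, whence $\mcgst/\normcl{\overline{\alpha_i}}\cong B_n(\St)/\normcl{\alpha_i}$ (the paper records this as the commutative diagram~\reqref{commdiagst}), and parts (a), (c) and the second half of (b) are then read off from \repr{bema} and \reth{bemaplus}(\ref{it:bemaplusd}), exactly as you do; your handling of the exceptional case $n=3$, $i=2$ (where $\alpha_2=\sigma_1^2=\ft[3]$, so the quotient is $\mcgst[3]\cong\sn[3]$ by~\reqref{defpermmcg}) simply reproves directly what the paper imports from \reth{bemaplus}(\ref{it:bemaplusd}). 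Where you genuinely diverge is the first assertion of (b). The paper quotes the classical fact that every torsion element of $\mcgst$ is conjugate to a power of some $\overline{\alpha_i}$ (citing Eilenberg, von Ker\'ekj\'art\'o, or Gonz\'alez-Meneses--Wiest) and deduces properness of $\normcl{\overline{x}}$ from \reth{bemaplus}(\ref{it:bemaplusb}) and~(\ref{it:bemaplusd}) via the diagram. You instead obtain the torsion classification internally---a lift $x$ of a $k$-torsion element satisfies $x^{2k}=1$ because the kernel $\ang{\ft}$ has order $2$, so \reth{murasugi} applies---and then establish properness by computing $(\mcgst)\up{Ab}\cong\Z_{2(n-1)}$ for even $n$ (the image of $\ft$, namely $n(n-1)$, vanishes modulo $2(n-1)$) and checking that the images $\overline{n-1}$, $\overline{n}$, $\overline{n-1}$ of the $\overline{\alpha_i}$ generate proper subgroups. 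Both routes are sound; yours is more self-contained, needing only Murasugi's theorem rather than the external references for torsion in $\mcgst$, at the cost of redoing at the level of $\mcgst$ the abelianisation bookkeeping that the paper already carried out for $B_n(\St)$ in the proof of \reth{bemaplus}(\ref{it:bemaplusb}).
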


\begin{proof}
For $i=0,1,2$, let $\alpha_{i}$ be the finite order element of $B_{n}(\St)$ defined in the statement of \reth{murasugi}. By~\cite{GvB}, the image $\overline{\alpha_{i}}$ of $\alpha_{i}$ under the projection $B_{n}(\St)\to \mcgst$ is of order $n-i$. Since $\alpha_{i}^{n-i}=\ft$, $\ft\in \normcl{\alpha_{i}}$, and we obtain the following commutative diagram of short exact sequences:
\begin{equation}\label{eq:commdiagst}
\begin{xy}*!C\xybox{%
\xymatrix{%
& 1\ar[d] & 1\ar[d] & &\\
& \ang{\ft}\ar[d]\ar@{=}[r] &\ang{\ft} \ar[d] & &\\
1 \ar[r]  & \normcl{\alpha_{i}} \ar[r] \ar[d] & B_{n}(\St) \ar[d] \ar[r] & B_{n}(\St)\left/\normcl{\alpha_{i}}\right. \ar[d]^{\cong} \ar[r] & 1\\
1 \ar[r] & \normcl{\overline{\alpha_{i}}} \ar[d]\ar[r] & \mcgst \ar[d]\ar[r] & \mcgst\left/\normcl{\overline{\alpha_{i}}}\right. \ar[r] & 1.\\
& 1 & 1 & &}}
\end{xy}
\end{equation}
The proposition then follows from \reth{bemaplus}(\ref{it:bemaplusb}) and~(\ref{it:bemaplusd}), using the fact in part~(\ref{it:normclsta}) that every torsion element of $\mcgst$ is conjugate to a power of one of the $\overline{\alpha_{i}}$(\cite{E,K} or~\cite[Lemma~3.1]{GW}).
\end{proof}

For the mapping class groups of $\St$, we thus obtain results similar to those of \reth{bemaplus}(\ref{it:bemaplusa})~(\ref{it:bemaplusc}) and~(\ref{it:bemapluse}). In the case of the pure mapping class groups, \req{dirsum} implies that $\pmcgst\cong P_{n}(\St)\left/\ang{\ft}\right.$ is isomorphic to $P_{n-3}(\St)$, which is torsion free.

\begin{thm}
Let $n\geq 3$.
\begin{enumerate}
\item\label{it:gensmcgst} $\operatorname{G}(\mcgst)=2$,  $\operatorname{NG}(\mcgst)=1$ and $\operatorname{TG}(\mcgst)=2$.
\item $\operatorname{NTG}(\mcgst)=
\begin{cases}
1 & \text{if $n$ is odd}\\
2 & \text{if $n$ is even.}
\end{cases}$
\item $\operatorname{G}(\pmcgst)= \operatorname{NG}(\pmcgst)=\operatorname{G}(\pmcgst)\up{Ab}=n(n-3)/2$.
\end{enumerate}
\end{thm}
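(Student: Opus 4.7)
The plan is to deduce the theorem directly from the earlier results, chiefly \reth{bemaplus} and \repr{normclst}, using the identifications $\mcgst\cong B_n(\St)/\ang{\ft}$ and $\pmcgst\cong P_n(\St)/\ang{\ft}$, and the inequalities of \repr{mingen} applied to the quotient homomorphism $B_n(\St)\to \mcgst$. There should be no substantive obstacle: each part follows from a short combination of facts already available.

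For part~(\ref{it:gensmcgst}), the surjection $B_n(\St)\twoheadrightarrow \mcgst$ together with \repr{mingen} and \reth{bemaplus}(\ref{it:bemaplusa}) gives $\operatorname{G}(\mcgst)\leq 2$ and $\operatorname{NG}(\mcgst)\leq 1$. For the reverse inequality for $\operatorname{G}$, the composition $\mcgst\twoheadrightarrow \sn$ (via $\widehat{\pi}$) shows $\mcgst$ is non-cyclic for $n\geq 3$, so $\operatorname{G}(\mcgst)\geq 2$; and $\operatorname{NG}(\mcgst)\geq 1$ because $\mcgst$ is non-trivial. Finally, since $B_n(\St)=\ang{\alpha_0,\alpha_1}$, we have $\mcgst=\ang{\overline{\alpha_0},\overline{\alpha_1}}$, and by the references cited in the proof of \repr{mingenartinmapp}(a), both $\overline{\alpha_0}$ and $\overline{\alpha_1}$ are of finite order, yielding $\operatorname{TG}(\mcgst)\leq 2$, which combined with \reqref{inequgeni} gives $\operatorname{TG}(\mcgst)=2$.

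Part~(b) is immediate from \repr{normclst}: if $n$ is odd, then part~(\ref{it:normclstodd}) gives $\operatorname{NTG}(\mcgst)=1$; if $n$ is even, then part~(\ref{it:normclsta}) shows $\operatorname{NTG}(\mcgst)\geq 2$, while \reqref{inequgen} combined with $\operatorname{TG}(\mcgst)=2$ from part~(a) gives $\operatorname{NTG}(\mcgst)\leq 2$.

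For part~(c), the key observation is that \req{dirsum} gives $P_n(\St)\cong P_{n-3}(\St\setminus\brak{x_1,x_2,x_3})\oplus \ang{\ft}$, so quotienting by $\ang{\ft}$ yields
\begin{equation*}
\pmcgst\cong P_{n-3}(\St\setminus\brak{x_1,x_2,x_3}).
\end{equation*}
From the proof of \reth{bemaplus}(\ref{it:bemapluse}), this last group admits the generating set $X'$ of cardinality $n(n-3)/2$, whose Abelianisation is free Abelian on the classes of $X'$, of rank $n(n-3)/2$. Hence $\operatorname{G}(\pmcgst)\leq n(n-3)/2$, and $\operatorname{G}(\pmcgst\up{Ab})=n(n-3)/2$ (as any generating set of a free Abelian group of rank $r$ has at least $r$ elements). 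The chain of inequalities $\operatorname{G}(\pmcgst)\geq \operatorname{NG}(\pmcgst)\geq \operatorname{G}(\pmcgst\up{Ab})$ from \repr{mingen} then forces all three quantities to equal $n(n-3)/2$.
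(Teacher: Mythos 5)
Your proposal is correct and follows essentially the same route as the paper: part~(a) via the epimorphism $B_{n}(\St)\to\mcgst$, \repr{mingen}, \reqref{inequgeni} and non-cyclicity; part~(b) via \repr{normclst} together with $\operatorname{TG}(\mcgst)=2$ and \reqref{inequgen}; part~(c) via \req{dirsum}, the generating set $X'$ and the Abelianisation computed in the proof of \reth{bemaplus}(\ref{it:bemapluse}), then \repr{mingen}. No discrepancies worth noting beyond your being slightly more explicit than the paper's terse citations.
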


\begin{proof}\mbox{}
\begin{enumerate}
\item The equalities follow from \reth{bemaplus}(\ref{it:bemaplusa}), using \req{inequgeni}, \repr{mingen} applied to the epimorphism $B_{n}(\St)\to \mcgst$, and the fact that $\mcgst$ is non cyclic.

\item If $n$ is odd then the result is given by \repr{normclst}(\ref{it:normclstodd}). So suppose that $n$ is even. \repr{normclst}(\ref{it:normclsta}) implies that $\operatorname{NTG}(\mcgst)>1$, and the result then follows from part~(\ref{it:gensmcgst}) and \req{inequgen}.

\item From \req{dirsum} and \reth{bemaplus}(\ref{it:bemapluse}), we have that $\pmcgst\cong P_{n-3}(\St)$ and $\operatorname{G}(\pmcgst)=n(n-3)/2$. We saw in the proof of \reth{bemaplus}(\ref{it:bemapluse}) that
\begin{equation*}
(P_{n-3}(\St \setminus \brak{x_1,x_2,x_3}))\up{Ab}\cong \Z^{n(n-3)/2}, 
\end{equation*}
so $\operatorname{G}(\pmcgst)\up{Ab}=n(n-3)/2$. The remaining equality is a consequence of \repr{mingen}.\qedhere
\end{enumerate}
\end{proof}

 
We now turn to the case of the mapping class groups of the projective plane.  Let $n\geq 2$. If $x\in B_{n}(\rp)$, let $\overline{x}$ denote its image in the quotient of $B_{n}(\rp)$ by $\ang{\ft}$, which we identify with $\mcgrp$. The Abelianisation of $B_{n}(\rp)$ is isomorphic to $\Z_{2}\oplus \Z_{2}$, where the first (resp.\ second) factor identifies all of the generators $\sigma_{i}$ (resp.\ $\rho_{j}$) of the presentation given by \repr{presvb}. Since the Abelianisation of $\ft$ is trivial by \req{ft}, the fact that $\mcgrp\cong B_{n}(\rp)\left/\ang{\ft}\right.$ implies that the Abelianisation of $\mcgrp$ is also isomorphic to $\Z_{2}\oplus \Z_{2}$, where the first (resp.\ second) factor identifies all of the generators $\overline{\sigma_{i}}$ (resp.\ $\overline{\rho_{j}}$) of $\mcgrp$.

\begin{prop}\label{genrpmapp}
Let $n\geq 2$.
\begin{enumerate}
\item\label{it:torsgenmcgrp} Let $a$ and $b$ be as defined in \req{defab}. Then the group $\mcgrp$ is generated by the elements  $\overline{a}$ and $\overline{b}$, which are of order $2n$ and $2(n-1)$ respectively. 
\item\label{it:gensmcgrp} $\operatorname{G}(\mcgrp)=\operatorname{TG}(\mcgrp)=2$.
\item The normal closure of any element of $\mcgrp$  is a proper 
subgroup of $\mcgrp$, and $\operatorname{NTG}(\mcgrp)=\operatorname{NG}(\mcgrp)=2$. In particular,  $\mcgrp$ is not strongly $k$-torsion generated for any $k\in \N$.
\item The quotient of $\mcgrp$ by either $\normcl{\vphantom{\smash{\overline{b}}}\overline{a}}$ or $\normcl{\smash{\overline{b}}}$ is isomorphic to $\Z_2$.  
\end{enumerate}
\end{prop}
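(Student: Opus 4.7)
The plan is to transfer \reth{genrp} to $\mcgrp\cong B_{n}(\rp)\left/\ang{\ft}\right.$ via the central extension $1\to\ang{\ft}\to B_{n}(\rp)\to\mcgrp\to 1$ described at the start of \resec{mcg}, in the same spirit as the passage from \reth{bemaplus} to the sphere mapping class group results via \repr{normclst}.

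The one non-formal input is the identification
\begin{equation*}
a^{2n}=b^{2(n-1)}=\ft.
\end{equation*}
To establish it, I would observe that $A=a^{n}=\rho_{n}\cdots\rho_{1}$ and $B=b^{n-1}=\rho_{n-1}\cdots\rho_{1}$ both lie in $P_{n}(\rp)$ and are of order $4$ by~\req{defalphabeta}, so $a^{2n}=A^{2}$ and $b^{2(n-1)}=B^{2}$ are elements of order $2$ in $P_{n}(\rp)$. Since $\ft$ is the unique element of order $2$ in $P_{n}(\rp)$ for every $n\geq 2$ (immediate from $P_{2}(\rp)\cong\quat$ in the initial case, and recalled in the introduction for $n\geq 3$), this forces the stated equality. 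The cyclic subgroups $\ang{a}$ and $\ang{b}$, of orders $4n$ and $4(n-1)$, therefore intersect $\ang{\ft}$ in its unique subgroup of order $2$, so $\overline{a}$ has order $2n$ and $\overline{b}$ has order $2(n-1)$ in $\mcgrp$. The generation statement in part~(\ref{it:torsgenmcgrp}) is then an immediate consequence of \reth{genrp}(\ref{it:genrpa}) and the surjection $B_{n}(\rp)\twoheadrightarrow\mcgrp$.

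Parts~(\ref{it:gensmcgrp}) and~(c) then follow by the same sandwiching used in \repr{mingenartinmapp} and for the sphere. Since $\ft$ abelianises trivially by~\req{ft}, the presentation of \repr{presvb} gives $(\mcgrp)\up{Ab}\cong\Z_{2}\oplus\Z_{2}$; in particular $\mcgrp$ is non-cyclic, so part~(\ref{it:torsgenmcgrp}) combined with \repr{mingen} yields $\operatorname{G}(\mcgrp)=\operatorname{TG}(\mcgrp)=2$ via~\req{inequgeni}. For~(c), \repr{mingen} applied to Abelianisation gives $\operatorname{NG}(\mcgrp)\geq\operatorname{G}((\mcgrp)\up{Ab})=2$, which shows that no single element can normally generate $\mcgrp$; the chain $2\leq\operatorname{NG}(\mcgrp)\leq\operatorname{NTG}(\mcgrp)\leq\operatorname{TG}(\mcgrp)=2$ from~\req{inequgen} then collapses to equality.

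For part~(d), I would identify the images of $\overline{a}$ and $\overline{b}$ in $(\mcgrp)\up{Ab}\cong\Z_{2}\oplus\Z_{2}$, the two factors recording the parities of the $\sigma$- and $\rho$-exponents. From~\req{defab}, $\overline{a}$ maps to $(\overline{n-1},\overline{1})$ and $\overline{b}$ to $(\overline{n-2},\overline{1})$, both non-zero regardless of the parity of $n$. Since $\mcgrp=\ang{\overline{a},\overline{b}}$ by part~(\ref{it:torsgenmcgrp}), each of the quotients $\mcgrp\left/\normcl{\overline{a}}\right.$ and $\mcgrp\left/\normcl{\overline{b}}\right.$ is cyclic, hence Abelian, and coincides with its own Abelianisation; modding $\Z_{2}\oplus\Z_{2}$ out by a non-zero element leaves $\Z_{2}$ in each case. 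The only real calculation in this plan is the identification $a^{2n}=b^{2(n-1)}=\ft$; once it is in hand, everything else is formal manipulation with the central extension by $\ang{\ft}$ together with an Abelianisation computation.
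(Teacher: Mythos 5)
Your proof is correct and follows essentially the same route as the paper: part~(a) via $a^{2n}=b^{2(n-1)}=\ft$ and the uniqueness of the order-$2$ element, and parts~(b)--(c) by sandwiching with $(\mcgrp)\up{Ab}\cong\Z_{2}\oplus\Z_{2}$, \repr{mingen} and the inequalities~\reqref{inequgeni} and~\reqref{inequgen}. The only cosmetic difference is in part~(d), where the paper transfers \reth{genrp}(\ref{it:torsnormgenrp2}) through the commutative diagram of central extensions by $\ang{\ft}$ (using $\ft=a^{2n}\in\normcl{a}$), whereas you re-run the same Abelianisation argument directly in $\mcgrp$; both are valid.
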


\begin{proof}\mbox{}
\begin{enumerate}
\item This follows immediately from \reth{genrp}(\ref{it:genrpa}), the fact that $a$ and $b$ are of order $4n$ and $4(n-1)$ respectively, as well as the uniqueness of $\ft$ as an element of order~$2$ of $B_{n}(\rp)$~\cite[Proposition~23]{GG2}.

\item Part~(\ref{it:torsgenmcgrp}) and \req{inequgeni} imply that $\operatorname{G}(\mcgrp)\leq\operatorname{TG}(\mcgrp)\leq 2$. On the other hand, $\mcgrp$ is not cyclic since its Abelianisation is $\Z_{2}\oplus \Z_{2}$, and so $\operatorname{G}(\mcgrp)>1$, which yields the result.

\item The first part follows from \repr{mingen} and the fact that the Abelianisation of  $B_n(\rp)$ is $\Z_2\oplus\Z_2$, which implies that $\operatorname{NG}(\mcgrp)>1$. The given equalities then follow from part~(\ref{it:gensmcgrp}) and equations~\reqref{inequgeni} and~\reqref{inequgen}.

\item The result is a consequence of \reth{genrp}(\ref{it:torsnormgenrp2}) and the commutative diagram~\reqref{commdiagst} with $\St$ replaced by $\rp$, and $\alpha_{i}$ replaced by either $a$ or $b$.\qedhere
\end{enumerate}
\end{proof}

Finally, we study the situation for the pure mapping class of $\rp$.

\begin{prop}\label{prop:ntgmapp}
Let $n\in \N$. Then $\pmcgrp$ is torsion generated by the set of torsion elements $\brak{\overline{a}^{n}, \overline{b}^{n-1}, \overline{a} \overline{b}^{n-1}\overline{a}^{-1}, \cdots, \overline{a}^{n-2}\overline{b}^{n-1}\overline{a}^{2-n}}$ of order $2$, and
\begin{align*}
\operatorname{G}(\pmcgrp)&= \operatorname{NG}(\pmcgrp)=\operatorname{TG}(\pmcgrp)\\
&=\operatorname{NTG}(\pmcgrp)=n.
\end{align*}
In particular, $\pmcgrp$ cannot be normally generated by any subset containing less than $n$ elements.
\end{prop}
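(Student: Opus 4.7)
The plan is to establish matching upper and lower bounds of $n$ on the four quantities $\operatorname{G}(\pmcgrp)$, $\operatorname{NG}(\pmcgrp)$, $\operatorname{TG}(\pmcgrp)$ and $\operatorname{NTG}(\pmcgrp)$, using the identification $\pmcgrp\cong P_{n}(\rp)/\ang{\ft}$ throughout. The upper bound will come from projecting an appropriate torsion generating set into $\pmcgrp$, while the lower bound will come from computing $H_{1}(\pmcgrp)$ via a Stallings $5$-term exact sequence applied to the central extension $1\to \ang{\ft}\to P_{n}(\rp)\to \pmcgrp\to 1$, following the pattern of \repr{mingenartinmapp}(b).

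For the upper bound, I would first note that the elements $A=a^{n}$ and $B=b^{n-1}$ are of order $4$ by \req{defalphabeta}, and since $\ft$ is the unique element of order $2$ in $P_{n}(\rp)$ by~\cite[Corollary~19]{GG2}, we have $A^{2}=B^{2}=\ft$. Therefore $\overline{A}$, $\overline{B}$ and all their conjugates are involutions in $\pmcgrp$, so every element of the set $T$ displayed in the statement is a torsion element (of order at most $2$). To show that $T$ generates, I would combine~\req{conja} and the identity $\rho_{n}=AB^{-1}$ coming from~\req{defalphabeta} to obtain, for each $k=1,\ldots,n$, the expression $\overline{\rho_{k}}=\overline{A}\ldotp \overline{a}^{n-k}\overline{B}\overline{a}^{-(n-k)}$ in $\pmcgrp$, using that $\overline{B}^{-1}=\overline{B}$ and that powers of $a$ commute with $\overline{A}=\overline{a}^{n}$. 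For $k=2,\ldots,n$ the exponent $n-k$ lies in $\brak{0,1,\ldots,n-2}$, exhibiting $\overline{\rho_{k}}$ as a product of two elements of $T$, while $\overline{\rho_{1}}$ is then recovered from the relation $\overline{A}=\overline{\rho_{n}}\overline{\rho_{n-1}}\cdots\overline{\rho_{1}}$. Since the $\overline{\rho_{k}}$ generate $\pmcgrp$, this yields $\operatorname{TG}(\pmcgrp)\leq n$.

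For the lower bound, I would apply Stallings' $5$-term exact sequence~\cite{Br,St} to the central extension above. Since $\ang{\ft}$ is central, the term $\ang{\ft}/[P_{n}(\rp),\ang{\ft}]$ collapses to $\ang{\ft}\cong\Z_{2}$, and the sequence takes the form
\begin{equation*}
H_{2}(P_{n}(\rp))\to H_{2}(\pmcgrp)\to \ang{\ft}\stackrel{\psi}{\to} H_{1}(P_{n}(\rp))\to H_{1}(\pmcgrp)\to 1.
\end{equation*}
Recalling from the proof of \reth{genrp}(\ref{it:genrpd}) that $H_{1}(P_{n}(\rp))\cong\Z_{2}^{n}$, the key check is that $\psi$ is trivial: since $\ft=A^{2}=(\rho_{n}\cdots\rho_{1})^{2}$, its image in the $2$-torsion group $\Z_{2}^{n}$ is the double of some element, hence zero. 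Exactness then yields $H_{1}(\pmcgrp)\cong \Z_{2}^{n}$, so $\operatorname{G}((\pmcgrp)\up{Ab})=n$, whence $\operatorname{NG}(\pmcgrp)\geq n$ by \repr{mingen}.

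Combining the bounds with the inequalities~\reqref{inequgeni} and~\reqref{inequgen} gives the four equalities $\operatorname{G}(\pmcgrp)=\operatorname{NG}(\pmcgrp)=\operatorname{TG}(\pmcgrp)=\operatorname{NTG}(\pmcgrp)=n$ at once, and the final assertion about normal generation follows immediately from $\operatorname{NG}(\pmcgrp)=n$. The main obstacle I anticipate is the upper bound: the particular set $T$ appearing in the statement is not literally the projection of the set $Y$ from \reth{genrp}(\ref{it:genrpd}), since the latter uses conjugation by \emph{negative} powers of $a$ whereas $T$ uses \emph{positive} powers; the trick that makes the argument above go through is that in the quotient $\pmcgrp$ both $\overline{A}$ and $\overline{B}$ become involutions, which effectively reverses the direction of conjugation and enables a direct parallel to the earlier proof.
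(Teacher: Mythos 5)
Your proposal is correct and takes essentially the same route as the paper: the upper bound comes from projecting the order-$4$ torsion generating set of $P_{n}(\rp)$ of \reth{genrp}(d) into $\pmcgrp\cong P_{n}(\rp)/\ang{\ft}$ using the uniqueness of $\ft$ as the element of order $2$ (your direct verification that the displayed set $T$, with positive conjugating powers, generates merely fills in a detail the paper leaves implicit), and the lower bound comes from $(\pmcgrp)\up{Ab}\cong\Z_{2}^{n}$ together with \repr{mingen} and the inequalities \reqref{inequgeni} and \reqref{inequgen}. The only difference is that you compute $H_{1}(\pmcgrp)$ via the Stallings five-term sequence, whereas the paper simply notes that the Abelianisation of $\ft$ is trivial so that $(\pmcgrp)\up{Ab}\cong(P_{n}(\rp))\up{Ab}$; both amount to the same calculation.
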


\begin{proof}
The first part is a consequence of the corresponding statement for $P_{n}(\rp)$ given in \reth{genrp}(\ref{it:genrpd}), the surjectivity of the homomorphism $P_{n}(\rp)\to \pmcgrp$ and the uniqueness of $\ft$ as an element of order~$2$ of $P_{n}(\rp)$. This implies that
\begin{equation*}
\operatorname{TG}(\pmcgrp)\leq n.
\end{equation*}
For the second part, recall from that proof of \reth{genrp}(\ref{it:genrpd}) that $(P_{n}(\rp))\up{Ab}\cong \Z_{2}^n$. Since the Abelianisation of $\ft$ is trivial, as in the case of $\mcgrp$ we have that $(P_{n}(\rp))\up{Ab}\cong (\pmcgrp)\up{Ab}$, which implies that $\operatorname{NG}(\pmcgrp)\geq n$ using \repr{mingen}. The given equalities then follow immediately from equations~\reqref{inequgeni} and~\reqref{inequgen}.\end{proof}

\section{The action of  $B_n(S)$ on the universal covering of $F_n(M)$, $M=\St, \rp$}\label{sec:actionh3}

In this section we give another application of our results to the study of the action of the braid groups of $\St$ and $\rp$ on the homology of the universal covering of the associated configuration spaces. If $\Sigma^n$ is a finite-dimensional $CW$-complex that has the homotopy type of the $n$-sphere $\St[n]$ and if $G$ is a group that acts on $\Sigma^n$, it is interesting to know whether the homomorphism induced by each element of $G$ on $H_n(\Sigma^n;\Z)\cong \Z$ is trivial (\emph{i.e.}\ is the identity, $\id$) or not (\emph{i.e.}\ is $-\id$). As the example of the action of $\Z$ on $\St[1]\times R$ given by $t(z,n)=(\overline{z},  t+n)$ shows, $\overline{z}$ denoting complex conjugation, in general the induced homomorphism is non trivial. In this section, we will show that if $M=\St$ or $\rp$, the group $B_n(M)$ acts trivially on $H_3(\widetilde{F_n(M)}; \Z)$.

If $X$ is a path-connected space that admits a universal covering $\widetilde{X}$, it is well known that the fundamental group $\pi_1(X)$ of $X$ acts freely on $\widetilde{X}$. If $M=\St$ (resp.\ $M=\rp$) and $n\geq 3$ (resp.\ $n\geq 2$), the ordered and unordered configuration spaces $F_{n}(M)$ and $D_{n}(M)$ of $M$ are finite-dimensional manifolds of dimension $2n$, and their universal coverings $\widetilde{F_n(M)}$ and $\widetilde{D_n(M)}$, which in fact coincide, have the homotopy type of $\St[3]$~\cite{BCP,FZ,GG13} (resp.\ \cite{GG2}). 
Although the \emph{method} of our proof of \repr{acthom} for the braid groups does not apply to the case of $P_{n}(\St)$ since this group is not torsion generated, the result itself is certainly true because $P_n(\St)$ is a subgroup of $B_n(\St)$. On the other hand, our method also applies directly to $P_n(\rp)$ since it is torsion generated.

\begin{proof}[Proof of \repr{acthom}.]
It suffices to prove the result for $H=B_n(M)$. From the proof of~\cite[Proposition~10.2, VII.10]{Br}, a finite-order element of a group $G$ that acts freely on a finite-dimensional homotopy (or homology) sphere $X$ whose homotopy type is that of  $\St[2n-1]$ acts trivially on the infinite cyclic group $H_{2n-1}(X;\Z)$. As we mentioned above, $\widetilde{F_n(M)}$ is a finite-dimensional complex that coincides with $\widetilde{D_n(M)}$ and that has the homotopy type of $\St[3]$. Further, $B_{n}(M)=\pi_{1}(D_n(M))$ acts freely on $\widetilde{D_n(M)}$, and $B_n(\St)$ (resp.\ $B_n(\rp)$) is torsion generated by~\cite[Theorem~3]{GG3} (resp.\ \reth{genrp}(\ref{it:genrpa})). Thus any element $x$ of $B_n(M)$ is a product of a finite number of elements of finite order, and so its action on $\widetilde{D_n(M)}$ induces the identity on $H_3(\widetilde{D_n(M)};\Z)$ because this action is the composition of homomorphisms, each of which is the identity by the result of~\cite{Br} mentioned above.
\end{proof}

\end{document}